\theoremstyle{plain}
    \newtheorem{thm}{Theorem}[section]
    \newtheorem{lem}[thm]{Lemma}
    \newtheorem{prop}[thm]{Proposition}
\theoremstyle{definition}    
    \newtheorem{defn}[thm]{Definition}
    \newtheorem{rem}[thm]{Remark}
\def\an{{\mathrm{an}}}
\def\aru{{\arrow[u,"\rotatebox{90}{$\in$}",phantom]}}
\def\B{{\mathcal{B}}}
\def\C{{\mathbb{C}}}
\def\Cc{{\mathcal{C}}}
\def\CH{{\mathrm{CH}}}
\def\Dc{{\mathscr{D}}}
\def\dec{{\mathrm{dec}}}
\def\del{\partial}
\def\div{{\mathrm{div}}}
\def\ev{{\mathrm{ev}}}
\def\id{{\mathrm{id}}}
\def\ind{{\mathrm{ind}}}
\def\J{{\mathcal{J}}}
\def\lra{\:{\longrightarrow}\:}
\def\NS{{\mathrm{NS}}}
\def\O{{\mathcal{O}}}
\def\P{{\mathbb{P}}}
\def\Pc{{\mathcal{P}}}
\def\Q{{\mathbb{Q}}}
\def\Qc{{\mathcal{Q}}}
\def\R{{\mathbb{R}}}
\def\rank{{\mathrm{rank}\:}}
\def\Spec{{\mathrm{Spec}\,}}
\def\tr{{\mathrm{tr}}}
\def\X{{\mathcal{X}}}
\def\Z{{\mathbb{Z}}}
\begin{document}
\title{Higher Chow cycles on $K3$ surfaces attached to plane quartics}
\author{Ken Sato}
\address{Department of Mathematics, Tokyo Institute of Technology}
\email{sato.k.da@m.titech.ac.jp}
\begin{abstract}
In this paper, we give an explicit construction of higher Chow cycles of type $(2,1)$ on $K3$ surfaces obtained as quadruple coverings of the projective plane ramified along smooth quartics.
The construction uses a pair of bitangents of the quartics.
We prove that the higher Chow cycles generate a rank 2 subgroup in the indecomposable part of the higher Chow group for very general members, by using a specialization argument and an explicit computation of the regulator map.
\end{abstract}
\maketitle
\section{Introduction}
Higher Chow groups $\CH^p(X,q)$ of smooth varieties $X$ introduced by Bloch are a generalization of the classical Chow groups and are related to many important invariants in algebraic geometry, $K$-theory, and number theory.
However, their explicit structures are still mysterious for almost all varieties when the codimension $p$ is greater than 1.
In this paper, we are interested in the first non-classical case $\CH^2(X,1)$ for $K3$ surfaces $X$.
The \textit{decomposable part} of $\CH^2(X,1)$ is the subgroup generated by cycles obtained as intersection products of lower codimensional cycles.
Since this is an easily accessible part, we are interested in the quotient of $\CH^2(X,1)$ by the decomposable part. 
This quotient is called the \textit{indecomposable part} and is denoted by $\CH^2(X,1)_\ind$.

Explicit constructions of non-trivial elements in $\CH^2(X,1)_\ind$ have been done for some lattice-polarized families of $K3$ surfaces (see the papers listed in the introduction of \cite{MS23}).
In \cite{MS23}, we construct families of higher Chow cycles on families of $K3$ surfaces $X$ with non-symplectic involutions $\iota$ in a systematic way.
The presence of involutions plays crucial roles in the construction of cycles, and also in the degeneration argument there.

In this paper, we consider the case of non-symplectic automorphisms of order $4$.
The most famous example, discovered by Kond\=o (\cite{Kondo}), is the the degree 4 covers of $\P^2$ ramified along smooth quartics.
They are naturally parametrized by the moduli space of smooth plane quartics.
After a suitable base change, we have a family $(\X,\sigma)\rightarrow S$ of $K3$ surfaces with non-symplectic automorphisms of order 4.
We explicitly construct a family of higher Chow cycles $\xi=\{\xi_s\}_{s\in S}$ on this family.
Our construction uses a pair $(l_0,l_1)$ of bitangents of the plane quartics.
On the $K3$ surfaces $X$ associated with the plane quartics, the inverse images of $l_0,l_1$ produce two rational curves intersecting at 2 points.
This configuration of rational curves enables us to construct a higher Chow cycle on $X$.
The main theorem of this paper is the following.
\begin{thm}\label{mainthmintro}
\rm{(Theorem \ref{mainthm})}
For a very general\footnote{In this paper, a very general point means that a point outside a union of countably many certain proper analytic subsets.} $s\in S$, the images of $\xi_s$ and $\sigma_*(\xi_s)$ in $\CH^2(\X_s,1)_\ind$ generate a subgroup of rank 2.
In particular, $\CH^2(\X_s,1)_\ind$ has rank $\ge 2$ for such $s\in S$.
\end{thm}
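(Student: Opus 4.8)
The plan is to detect the classes through the Beilinson regulator, to use the order-$4$ automorphism $\sigma$ to convert the rank-$2$ assertion into a single non-vanishing statement, and then to establish that non-vanishing by a degeneration computation.

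\emph{Step 1: regulator and indecomposability.} Recall the regulator $r\colon\CH^2(X,1)\to H^3_{\mathcal D}(X,\Z(2))$. For a $K3$ surface $X$ one has $H^3(X,\Z)=0$, hence $H^3_{\mathcal D}(X,\Z(2))\cong H^2(X,\C)/(F^2H^2(X)+H^2(X,\Z(2)))$, and $F^2H^2(X)=\C\omega_X$ for a nowhere-vanishing holomorphic $2$-form $\omega_X$. Since $\omega_X\wedge\omega_X=0$, cup product with $\omega_X$ descends to a homomorphism
\[
R_X\colon \CH^2(X,1)\lra \C/\Lambda_X,\qquad \Lambda_X:=\langle\,\omega_X,\ H^2(X,\Z(2))\,\rangle .
\]
The regulator of a decomposable cycle $\{D,\alpha\}$ is a $\C$-multiple of the integral class $[D]\in H^{1,1}(X)$, and $H^{1,1}(X)\cup\C\omega_X\subseteq H^{3,1}(X)=0$; therefore $R_X$ kills $\CH^2(X,1)_{dec}$ and factors through $\CH^2(X,1)_\ind$. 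Consequently, to prove the theorem it suffices to show that, for very general $s\in S$, the element $R_{\X_s}(\xi_s)\in\C/\Lambda_{\X_s}$ is not $\Z[i]$-torsion: indeed, if $m(a\xi_s+b\sigma_*\xi_s)\in\CH^2(\X_s,1)_{dec}$ for some $m\ge 1$, then $m\cdot R_{\X_s}(a\xi_s+b\sigma_*\xi_s)=0$.

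\emph{Step 2: reduction to one normal function via $\sigma$.} Since $\sigma$ is non-symplectic of order $4$, $\sigma^*\omega_{\X_s}=\zeta\,\omega_{\X_s}$ with $\zeta$ a primitive $4$th root of unity, and $\Lambda_{\X_s}$ is a $\Z[\zeta]$-submodule of $\C$, so multiplication by elements of $\Z[\zeta]$ acts on $\C/\Lambda_{\X_s}$. By functoriality of $r$ under proper pushforward and the projection formula for cup product,
\[
R_{\X_s}(\sigma_*\xi_s)=\langle\sigma_*r(\xi_s),\omega_{\X_s}\rangle=\langle r(\xi_s),\sigma^*\omega_{\X_s}\rangle=\zeta\cdot R_{\X_s}(\xi_s),
\]
so $R_{\X_s}(a\xi_s+b\sigma_*\xi_s)=(a+b\zeta)R_{\X_s}(\xi_s)$ with $a+b\zeta\neq 0$ whenever $(a,b)\neq(0,0)$. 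Thus the condition of Step 1 follows from: \emph{the holomorphic section $\nu_\xi\colon s\mapsto R_{\X_s}(\xi_s)$ of the relevant quotient bundle over $S$ is not identically zero}. Indeed, for each $w\in\Z[\zeta]\setminus\{0\}$ and each flat integral section $\gamma$ of $R^2\pi_*\Z(2)$, the locus $\{\,s: w\,\nu_\xi(s)=\langle\omega_{\X_s},\gamma\rangle\,\}$ is analytic, and it is proper once $\nu_\xi\not\equiv 0$ (its germ at the boundary point of Step 4 cannot satisfy such an identity, by the asymptotic computed there); the union of these countably many proper analytic subsets is exactly the complement of the very general locus on which $R_{\X_s}(\xi_s)$ is non-$\Z[\zeta]$-torsion, so that the images of $\xi_s$ and $\sigma_*\xi_s$ in $\CH^2(\X_s,1)_\ind$ generate a rank-$2$ subgroup there.

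\emph{Steps 3 and 4: explicit shape of $\nu_\xi$ and its non-vanishing.} By construction $\xi_s=\{f_0,C_0\}+\{f_1,C_1\}$, where $C_0,C_1\subset\X_s$ are the rational curves over the two chosen bitangents, meeting in $\{p_s,q_s\}=C_0\cap C_1$, and $f_i\in\C(C_i)^\times\cong\C(\P^1)^\times$ satisfy $\mathrm{div}(f_0)=p_s-q_s=-\mathrm{div}(f_1)$. For a cycle of this shape the pairing of the regulator with $\omega_{\X_s}$ is given by the standard current formula and reduces to a membrane integral $\tfrac{1}{2\pi i}\int_{\Gamma_s}\omega_{\X_s}$ over a real $2$-chain $\Gamma_s$ assembled from $C_0$, $C_1$ and explicit arcs on the two $\P^1$'s joining $p_s$ to $q_s$; this realizes $\nu_\xi$ as a concrete period-type integral in $s$. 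To see $\nu_\xi\not\equiv 0$ I would let $s$ approach a suitably chosen boundary point $0$ of $S$ at which the plane quartic, hence $\X_s$, undergoes a controlled, $\sigma$-equivariant degeneration; identifying the limit mixed Hodge structure together with the limit of $\Gamma_s$, one then computes the leading asymptotic of $\nu_\xi(s)$ and shows it is nonzero — either a finite nonzero value built from a (possibly elliptic) dilogarithm of algebraic arguments, or a nonzero multiple of $\log s$ forced by nontrivial local monodromy on the transcendental part. This last point — selecting a degeneration for which the limiting cycle and limiting $2$-form are simultaneously computable, transporting the order-$4$ symmetry through the limit, and proving that the limiting regulator value is genuinely nonzero (which may require an explicit numerical estimate or a transcendence/linear-independence input) — is the main obstacle; the rest of the argument is the formal package of Steps 1--2.
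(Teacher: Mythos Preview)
Your Steps 1--2 are essentially correct and parallel the paper's strategy: pair the regulator with $\omega$ to kill decomposables, then use the $\Z[\sqrt{-1}]$-action coming from $\sigma$ to reduce the rank-$2$ statement to non-torsion of a single normal function. (The paper routes this through an auxiliary Jacobian $J(X,\sigma)$ rather than $\C/\Lambda_X$ directly, but your use of the transcendental regulator is fine and in fact simpler for killing decomposables, since $\langle [D],\omega\rangle=0$ for any divisor $D$.) One logical wrinkle: in Step 2 you actually need $\nu_\xi$ to be \emph{non-torsion} as a section, not merely nonzero---if $w\nu_\xi$ happened to equal a period function globally for some $w\in\Z[\zeta]\setminus\{0\}$, the corresponding locus would be all of $S$. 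You defer this to the asymptotic of Step 4, which is acceptable but should be stated as the actual target.

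The genuine gap is Steps 3--4, which you yourself flag as ``the main obstacle'' and leave as a plan rather than a proof. Your proposed route---degenerate to a boundary point of $S$, identify the limit mixed Hodge structure and the limiting membrane, and extract a nonzero asymptotic possibly via a dilogarithm identity or transcendence input---is the approach of \cite{MS23}, and it is \emph{not} what the paper does here. The paper explicitly avoids boundary degeneration. Instead it specializes to a $1$-parameter subfamily $T\hookrightarrow S$ lying \emph{inside} the moduli (a pencil of quartics $C_t$ with a $\mu_4$-action, containing the Fermat quartic $F$), exhibits a dominant rational map $C_t\times F\dashrightarrow \X_t$, and uses it to show that the period functions of $\omega$ along $T$ satisfy the Gauss hypergeometric equation $\Dc_t(P)=0$ of type $(1/2,1/2,1)$. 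The membrane integral $G(t)$ representing $\nu_\xi$ is then written as an explicit real double integral; a rational change of variables followed by Stokes' theorem collapses $\Dc_t(G)$ to an elementary one-variable integral equal to $\tfrac{1}{2\sqrt{t}(t+1)}\neq 0$. Since $\Dc_t$ annihilates every period, this forces $G$ (hence $\nu_\xi$) to be non-torsion in $\Qc_\omega$, with no limit MHS, dilogarithm evaluation, or transcendence argument required. Your boundary-degeneration scheme might be pushed through, but it is a different and likely harder proof; as written, the proposal does not establish the non-vanishing.
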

All previous constructions of higher Chow cycles on $K3$ surfaces have been done for lattice-polarized families of $K3$ surfaces, thus they are parametrized by orthogonal modular varieties.
On the other hand, our family of $K3$ surfaces is \textit{not} a lattice-polarized family, and is parametrized by a quotient of a complex ball by a unitary group, as proved by Kond\=o (\cite{Kondo}).
We give the first example of indecomposable higher Chow cycles on a non-lattice-polarized family of $K3$ surfaces.

The proof is divided into two parts; a specialization argument for a variant of normal functions and an explicit calculation on the normal function associated with our cycles.

We define the Jacobian $J(X,\sigma)$ for a $K3$ surface $X$ with a non-symplectic automorphism $\sigma$ of order $4$.
Then we have the corresponding Abel-Jacobi map
\begin{equation}\label{varAJ}
\begin{tikzcd}
\nu_{\sqrt{-1}}:&[-30pt]\CH^2(X,1)\arrow[r] & J(X,\sigma).
\end{tikzcd}
\end{equation}
On the family $(\X,\sigma)\rightarrow S$, the Jacobians $J(\X_s,\sigma_s)$ form a family of generalized complex tori over $S$, and the images of $\xi_s$ under the map (\ref{varAJ}) form a section of this Jacobian fibration, namely a certain type of normal functions.

The advantages of considering $J(X,\sigma)$ are its stability under deformation and its coincidence with the transcendental part of the Jacobian for very general members.
This enables us to reduce the proof of Theorem \ref{mainthmintro} to finding \textit{one} special member $(\X_s,\sigma_s)$ of the family on which the image of $\xi_s$ under the map (\ref{varAJ}) is non-torsion.
This approach is similar to the one in \cite{MS23}, but slightly different from it.
In \cite{MS23}, the degeneration to the boundary of the moduli space is used, but in this paper, we consider specialization inside the moduli space.

To find such a special member, we consider a 1-dimensional family of special quartics containing the Fermat curve $F$.
For each member $C$ of this family, $C$ has a $\mu_4$-action, and the associated $K3$ surface $X$ admits a rational map 
\begin{equation}\label{ratintro}
\begin{tikzcd}
\varphi:&[-30pt]C \times F \arrow[r, dashrightarrow] & X,
\end{tikzcd}
\end{equation}
which induces a birational equivalence between $(C\times F)/\mu_4$ and $X$.
This result is analogous to the fact that the Fermat surface of degree $m$ is birationally equivalent to the quotient of the self-product of Fermat curves of degree $m$ by a $\mu_m$-action (\cite{SK79}).

Using the rational map (\ref{ratintro}), we can prove that the period functions of this special family satisfy the Gauss hypergeometric differential equation of type $(1/2,1/2,1)$.
On the other hand, we can show that the images of $\xi_s$ under the transcendental regulator map are represented by a multivalued function $G(\lambda)$ satisfying the differential equation
\begin{equation}\label{inhomogPF}
\lambda(1-\lambda)\dfrac{d^2G}{d\lambda^2}+(1-2\lambda)\frac{dG}{d\lambda}-\frac{1}{4}G=\frac{1}{2-\lambda}\sqrt{-\frac{2}{\lambda}}.
\end{equation}
This implies that the images of $\xi_s$ under the map (\ref{varAJ}) are non-torsion for very general members of this special family, so we can prove the theorem.
The differential equation (\ref{inhomogPF}) is an example of the inhomogeneous Picard-Fuchs equations arising from the regulator map, which are studied in \cite{dAMS08}, \cite{CDKL16}.
Our function $G(\lambda)$ is a new variant of hypergeometric functions which appears as the value of the regulator map.

The structure of this paper is as follows.
In Section 2, we recall basic results on higher Chow cycles and normal functions.
In Section 3, we define the Jacobian $J(X,\sigma)$, recall the Kond\=o's construction of $K3$ surfaces attached to plane quartics, and prove the specialization argument for such $(X,\sigma)$.
In Section 4, we define the family $(\X,\sigma)\rightarrow S$ and construct the family of $(2,1)$-cycles on it.
In Section 5, we define the special 1-dimensional subfamily and the rational map (\ref{ratintro}).
Then, by explicitly computing the transcendental regulator map, we show that the images of the $(2,1)$-cycles under the map (\ref{varAJ}) are non-torsion for a very general member of this family.

\subsection*{Acknowledgement}
The author is very grateful to Shohei Ma for many valuable suggestions and encouragement.
This work was supported by JSPS KAKENHI 21H00971.

\section{Preliminaries}
\subsection{Higher Chow cycles}
For a smooth variety $X$ over $\C$, let $\CH^p(X,q)$ be the higher Chow group defined by Bloch (\cite{bloch}).
In this paper, we consider the case $(p,q)=(2,1)$.
An element of $\CH^2(X,1)$ is called a \textit{$(2,1)$-cycle}.
The higher Chow group $\mathrm{CH}^2(X,1)$ is isomorphic to the middle homology group of the following complex. For the proof, see, e.g., \cite{MS2} Corollary 5.3.
\begin{equation*}
K_2^{\mathrm{M}} (\C(X)) \xrightarrow{T} \displaystyle\bigoplus_{C\in X^{(1)}}\C(C)^\times \xrightarrow{\mathrm{div}}\displaystyle\bigoplus_{p\in X^{(2)}}\Z\cdot p 
\end{equation*}
Here $X^{(r)}$ denotes the set of all irreducible closed subsets of $X$ of codimension $r$. 
The map $T$ denotes the tame symbol map from the Milnor $K_2$-group of the function field $\C(X)$.
Therefore, each $(2,1)$-cycle is represented by a formal sum
\begin{equation}\label{formalsum}
\sum_j (C_j, f_j)\in \displaystyle\bigoplus_{C\in X^{(1)}}\C(C)^\times
\end{equation}
where $C_j$ are prime divisors on $X$ and $f_j\in \C(C_j)^\times$ are non-zero rational functions on them such that $\sum_j {\mathrm{div}}_{C_j}(f_j) = 0$ as codimension 2 cycles on $X$.

The intersection product on higher Chow groups induces the following map.
\begin{equation}\label{intersectionproduct}
\mathrm{Pic}(X)\otimes_\Z \Gamma(X,\mathcal{O}_X^\times)=\CH^1(X)\otimes_\Z \CH^1(X,1) \longrightarrow \mathrm{CH}^2(X,1)
\end{equation}
The image of this map is called the \textit{decomposable part} of $\mathrm{CH}^2(X,1)$ and is denoted by $\CH^2(X,1)_\dec$.
A \textit{decomposable cycle} is an element of $\CH^2(X,1)_\dec$.
The quotient $\CH^2(X,1)/\CH^2(X,1)_\dec$ is called the \textit{indecomposable part} of $\CH^2(X,1)$ and is denoted by $\mathrm{CH}^2(X,1)_{\mathrm{ind}}$. 
For a $(2,1)$-cycle $\xi$, $\xi_\ind$ denotes its image in $\mathrm{CH}^2(X,1)_{\mathrm{ind}}$. 

Let $C$ be a prime divisor on $X$, $\alpha\in \Gamma(X,\O_X^\times)$ and $[C]\in \mathrm{Pic}(X)$ be the class corresponding to $C$. 
The image of $[C]\otimes \alpha$ under (\ref{intersectionproduct}) is represented by $(C,\alpha|_C)$ in the presentation (\ref{formalsum}).

\subsection{The regulator map}
For a $\Z$-Hodge structure $H=(H_\Z,F^\bullet)$ of weight 2, the generalized intermediate Jacobian of $H$ is defined as the quotient group 
\begin{equation}
J(H) = \frac{H_\C}{H_\Z + F^2H_\C}.
\end{equation}
This is a generalized complex torus, i.e., a quotient of a $\C$-linear space by a discrete lattice.
Let $X$ be a $K3$ surface. 
We write $J(X) = J(H^2(X,\Z))$ and simply call it the \textit{Jacobian} of $X$.
The following \textit{Beilinson regulator map} plays a role of the Abel-Jacobi map in the study of $(2,1)$-cycles (cf. \cite{KLM}).
\begin{equation}\label{regulatormap}
\nu: \mathrm{CH}^2(X,1) \longrightarrow J(X)
\end{equation}
We recall a formula for the regulator map following \cite{Le} pp. 458--459.
By the intersection product, the Jacobian of $X$ is isomorphic to 
\begin{equation}\label{Delignefunctional}
J(X)\simeq \frac{(F^1H^2(X,\C))^\vee}{H_2(X,\Z)}
\end{equation}
where $(F^1H^2(X,\C))^\vee$ is the dual $\C$-linear space of $F^1H^2(X,\C)$ and we regard $H_2(X,\Z)$ as a subgroup of $(F^1H^2(X,\C))^\vee$ by integration. 

Let $\xi$ be a $(2,1)$-cycle represented by (\ref{formalsum}).
Let $D_j$ be the normalization of $C_j$ and $\mu_j: D_j\rightarrow X$ be the composition of $D_j\rightarrow C_j\hookrightarrow X$. 
We will define a topological 1-chain $\gamma_j$ on $D_j$.
If $f_j$ is constant, we define $\gamma_j = 0$.
If $f_j$ is not constant, we regard $f_j$ as a finite morphism from $D_j$ to $\P^1$. 
Then we define $\gamma_j$ as the pull-back of $[\infty, 0]$ by $f_j$, where $[\infty, 0]$ is a path on $\P^1$ from $\infty$ to $0$ along the positive real axis. 
By the condition $\sum_j \div_{C_j}(f_j) = 0$, $\gamma = \sum_j (\mu_j)_*\gamma_j$ is a topological 1-cycle on $X$.
Since $H_1(X, \Z) = 0$, there exists a 2-chain $\Gamma$ on $X$ such that $\partial\Gamma = \gamma$.
In this paper, $\Gamma$ is referred to as a \textit{2-chain associated with $\xi$}.
Then the image of $\xi$ under the regulator map is represented by the following element in $(F^1H^2(X,\C))^\vee$.
\begin{equation}\label{Levineformula}
\begin{tikzcd}
F^1H^2(X,\C)\ni \text{[}\omega\text{]} \arrow[r,mapsto] & \displaystyle\int_\Gamma\omega  + \sum_j\dfrac{1}{2\pi\sqrt{-1}}\displaystyle\int_{D_j-\gamma_j}\log (f_j)\mu_j^*\omega
\end{tikzcd}
\end{equation}
where $\log(f_j)$ is the pull-back of a logarithmic function on $\P^1-[\infty,0]$ by $f_j$.

From the formula, for a prime divisor $C$ on $X$ and $\alpha\in \C^\times$, the image of the $(2,1)$-cycle represented by $(C,\alpha)$ under the regulator map is $[C]\otimes \frac{1}{2\pi \sqrt{-1}}\log(\alpha)$.
In particular, we have 
\begin{equation}\label{JNS}
\nu(\CH^2(X,1)_\dec) = \NS(X)\otimes_\Z(\C/\Z) \quad (\subset  J(X)).
\end{equation}

\subsection{Normal functions}
We consider the regulator map in the relative setting.
Let $S$ be a smooth variety and $\mathcal H = (\mathcal H_\Z, F^\bullet)$ be a variation of $\Z$-Hodge structure of weight 2 over $S$.
Then 
\begin{equation*}
J(\mathcal H) = \frac{\mathcal H_\Z\otimes \O_S}{\mathcal H_\Z + F^2\mathcal H}
\end{equation*}
is a family of generalized complex tori over $S$.
Let $\pi: \X\rightarrow S$ be a smooth family of $K3$ surfaces over $S$.
Let $\J(\X)\rightarrow S$ be the family of Jacobians attached to the variation of Hodge structure $R^2\pi_*\Z_\X$.
The fiber of $\J(\X)\rightarrow S$ over $s\in S$ is $J(\X_s)$.

Suppose that we have irreducible divisors $\Cc_j$ on $\X$ which are smooth over $S$ and non-zero rational functions $f_j$ on $\Cc_j$ whose zeros and poles are also smooth over $S$.
Assume that they satisfy the condition $\sum_j\div_{(\Cc_j)_s}((f_j)_s) = 0$ on each fiber $\X_s$.
Then we have a family of $(2,1)$-cycles $\xi = \{\xi_s\}_{s\in S}$ such that $\xi_s\in \CH^2(\X_s,1)$ is represented by the formal sum $\sum_{j}((\Cc_j)_s,(f_j)_s)$.
In this paper, such a family of $(2,1)$-cycles is called an \textit{algebraic family of $(2,1)$-cycles}.
Then the section $S\rightarrow \J(\X); s\mapsto \nu(\xi_s)$ is holomorphic and satisfies the horizontality condition, namely it is a normal function (\cite{CDKL16} Remark 2.1). 
We denote this section by $\nu(\xi)$ and call it the \textit{normal function associated with $\xi$}.

\subsection{The transcendental regulator map}
In Section 5, we compute images of higher Chow cycles under the following variant of the regulator map.
\begin{equation}\label{transreg}
\begin{tikzcd}
\Phi:&[-30pt] \CH^2(X,1)\arrow[r,"\nu"] & J(X) \simeq \dfrac{(F^1H^2(X,\C))^\vee}{H_2(X,\Z)} \arrow[r,twoheadrightarrow] &\dfrac{(H^{2,0}(X))^\vee}{H_2(X,\Z)}
\end{tikzcd}
\end{equation}
where the last projection is induced by $H^{2,0}(X) \hookrightarrow F^1H^2(X,\C)$.
This map is called the \textit{transcendental regulator map}.

Let $\omega$ be a non-zero holomorphic 2-form on $X$.
Considering the paring with $[\omega]\in H^{2,0}(X)$, we have an isomorphism between the target of (\ref{transreg}) and the abelian group $\C/\Pc(\omega)$, where $\Pc(\omega)$ is the set of periods of $X$ with respect to $\omega$, i.e.,
\begin{equation*}
\Pc(\omega) = \left\{\displaystyle \int_{\Gamma}\omega  \in \C : \Gamma \text{ is a toplogical 2-cycle on }X.\right\}.
\end{equation*}
By the formula (\ref{Levineformula}), the transcendental regulator map is calculated as
\begin{equation}\label{transregval}
\Phi(\xi)([\omega]) \equiv  \int_{\Gamma}\omega  \mod \Pc(\omega)
\end{equation}
where $\Gamma$ is a 2-chain associated with $\xi$.

Let $\X\rightarrow S$ be a family of $K3$ surfaces and $\omega$ be a nowhere vanishing relative 2-form.
For an open subset $U$ of $S$ in the classical topology, a period function with respect to $\omega$ on $U$ is a holomorphic function on $U$ given by $U\ni s\mapsto \int_{\Gamma_s}\omega_s$, where $\{\Gamma_s\}_{s\in U}$ is a $C^\infty$-family of 2-cycles on $\X_s$.
The period functions with respect to $\omega$ generate (as sheaves of abelian groups) the subsheaf $\Pc_\omega$ of the sheaf $\O_S^\an$ of holomorphic functions. 
Let $\Qc_\omega$ be the quotient $\O_{S}^\an/\Pc_\omega$.
For each $s\in S$, the evaluation map $\O_{S}^\an \rightarrow \C; f\mapsto f(s)$ induces the map
\begin{equation*}
\ev_s: \Gamma(S,\Qc_\omega) \lra \C/\Pc(\omega).
\end{equation*}

For an algebraic family $\xi = \{\xi_s\}_{s\in S}$ of $(2,1)$-cycles, the right-hand side of (\ref{transregval}) varies holomorphically on $S$ (\cite{CL}, Proposition 4.1).
Thus we have the section $\nu_{\tr}(\xi)(\omega)\in \Gamma(S,\Qc_\omega)$ such that for each $s\in S$,
\begin{equation*}
\ev_s(\nu_{\tr}(\xi)(\omega)) = \Phi(\xi_s)([\omega_s]) \quad\text{in } \C/\Pc(\omega_s).
\end{equation*}
We use the following elementary property of sections of $\Qc_\omega$.
For the proof, see, e.g., \cite{Sat24}, Lemma 2.4.
\begin{lem}\label{importantlemma}
If $\varphi\in \Gamma(S,\Qc_\omega)$ is non-zero, $\ev_s(\varphi)$ is non-zero for a very general $s\in S$.
In particular, if $\nu_{\tr}(\xi)(\omega)\neq 0$, $\Phi(\xi_s)\neq 0$ for a very general $s\in S$.
\end{lem}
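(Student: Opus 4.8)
The plan is to show that the set $B=\{\,s\in S:\ev_s(\varphi)=0\,\}$ is contained in a countable union of proper analytic subsets of $S$; the first assertion then follows immediately from the definition of \emph{very general}. Throughout I assume $S$ is connected, hence irreducible (it is smooth); otherwise one runs the argument on the component where $\varphi$ is nonzero.

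The key preliminary step is an \emph{identity principle} for sections of $\Qc_\omega$: any $\psi\in\Gamma(S,\Qc_\omega)$ that vanishes on some nonempty open subset vanishes identically. To prove this I would work over connected, simply connected opens $U\subset S$, on which a local lift $\tilde\psi\in\Gamma(U,\O_S^\an)$ of $\psi|_U$ is well defined modulo $\Gamma(U,\Pc_\omega)$, and the point is to describe the latter group precisely: picking a frame $\delta_1,\dots,\delta_r$ of the locally constant system $s\mapsto H_2(\X_s,\Z)$ over $U$ (with $r=\rank H_2(\X_s,\Z)$) and setting $\eta_i=\int_{\delta_i}\omega\in\Gamma(U,\O_S^\an)$, one has $\Gamma(U,\Pc_\omega)=\bigl\{\sum_{i=1}^r n_i\eta_i:n_i\in\Z\bigr\}$, because a local $C^\infty$-family of $2$-cycles has locally constant homology class, so $\Pc_\omega$ is, by construction, the image of this locally constant system under integration against $\omega$. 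Consequently $\psi|_U=0$ if and only if $\tilde\psi=\sum_i n_i\eta_i$ for one fixed tuple $(n_i)\in\Z^r$. Granting this, the ordinary identity principle applied to the genuine holomorphic functions $\tilde\psi$ and $\eta_i$ shows that the set of $s\in S$ at which the germ of $\psi$ vanishes is open and closed, hence all of $S$ once it is nonempty.

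With the identity principle in hand, and since $\varphi\neq 0$, the germ of $\varphi$ is nonzero at every point of $S$. I would then cover $S$ by countably many connected, simply connected opens $U_k$, choose a lift $\tilde\varphi_k$ of $\varphi|_{U_k}$ and period functions $\eta_1^{(k)},\dots,\eta_r^{(k)}$ as above. For $s\in U_k$ one has $\Pc(\omega_s)=\sum_i\Z\,\eta_i^{(k)}(s)$, since the classes of topological $2$-cycles on $\X_s$ exhaust $H_2(\X_s,\Z)$, so
\[ B\cap U_k=\bigcup_{(n_1,\dots,n_r)\in\Z^r}\Bigl\{\,s\in U_k:\tilde\varphi_k(s)-\sum_{i=1}^r n_i\,\eta_i^{(k)}(s)=0\,\Bigr\}. \]
Each member of this union is the zero set of a holomorphic function on $U_k$, hence a proper analytic subset unless that function vanishes identically on $U_k$ — which would force $\varphi|_{U_k}=0$, contradicting the nonvanishing of the germ. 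Thus $B\cap U_k$, and therefore $B=\bigcup_k(B\cap U_k)$, is a countable union of proper analytic subsets of $S$, proving the first statement. The ``in particular'' is then immediate: by construction $\ev_s(\nu_{\tr}(\xi)(\omega))=\Phi(\xi_s)([\omega_s])$ in $\C/\Pc(\omega_s)$, so $\nu_{\tr}(\xi)(\omega)\neq 0$ forces $\Phi(\xi_s)([\omega_s])\neq 0$, and a fortiori $\Phi(\xi_s)\neq 0$, for very general $s\in S$.

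The step I expect to demand the most care is the identity principle for $\Qc_\omega$ — concretely, the precise identification of the local sections of $\Pc_\omega$ with the integral span of a locally constant frame of $\omega$-period integrals, and the analytic continuation that upgrades it to a global statement. This is genuinely needed: without it, $B$ could contain a nonempty open set, and then very general points would lie in $B$. Once it is established, what remains is the routine bookkeeping of a countable family of analytic hypersurfaces.
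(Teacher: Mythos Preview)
Your argument is correct and follows the standard approach for this type of statement. The paper does not actually give its own proof of this lemma but defers to \cite{Sat24}, Lemma~2.4; your reconstruction---an identity principle for $\Qc_\omega$ via local holomorphic lifts and the classical identity theorem, followed by writing the bad locus locally as a countable union of zero sets indexed by the period lattice---is precisely the expected argument.
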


\section{$K3$ surfaces with non-symplectic automorphisms of order $4$}
Let $X$ be a $K3$ surface.
An automorphism $\sigma:X\rightarrow X$ of order $4$ is called \textit{purely non-symplectic} if $\sigma^*$ acts on $H^{2,0}(X)$ by a primitive 4th root of unity.
Hereafter we impose the normalization condition that $\sigma^*$ acts on $H^{2,0}(X)$ by $\sqrt{-1}$.

In Section 3.1, we define a variant of the Jacobian for a pair $(X,\sigma)$ as above.
In Section 3.2, we recall the Kond\=o's family (\cite{Kondo}), namely the pair $(X,\sigma)$ attached to plane quartics.
In Section 3.3, we prepare our specialization argument in advance.

\subsection{The variant of the normal function}
Let $(X,\sigma)$ be a pair of a $K3$ surface and a purely non-symplectic automorphism of order 4 on $X$.
We decompose $H^2(X,\C)$ into the eigenspaces of the $\sigma^*$-action as follows.
\begin{equation*}
H^2(X,\C) = H_{1}\oplus H_{-1}\oplus H_{\sqrt{-1}}\oplus H_{-\sqrt{-1}}.
\end{equation*}
We define the following primitive sublattices of $H^2(X,\Z)$.
\begin{equation*}
\begin{aligned}
&L(X,\sigma) = (H_{1}\oplus H_{-1})\cap H^2(X,\Z) \\
&H(X,\sigma) =\left(H_{\sqrt{-1}}\oplus H_{-\sqrt{-1}}\right)\cap H^2(X,\Z)
\end{aligned}
\end{equation*}
Note that they are orthogonal to each other in $H^2(X,\Z)$ with respect to the cup product.
Since $H(X,\sigma)$ is the kernel of the map $(\sigma^2)^*+\id:H^2(X)\rightarrow H^2(X)$, this is a sub $\Z$-Hodge structure of $H^2(X)$.
We define the \textit{Jacobian of $(X,\sigma)$} by
\begin{equation*}
J(X,\sigma) = J(H(X,\sigma)^\vee),
\end{equation*}
where $H(X,\sigma)^\vee$ is the dual of $H(X,\sigma)$.
Since $H(X,\sigma)^\vee$ has a $\Z[\sqrt{-1}]$-module structure by $\sigma^*$, we can regard $J(X,\sigma)$ as a $\Z[\sqrt{-1}]$-module.

The inclusion $H(X,\sigma)\hookrightarrow H^2(X,\Z)$ induces the map 
\begin{equation}\label{surjHodge}
\begin{tikzcd}
H^2(X,\Z)=H^2(X,\Z)^\vee \arrow[r] & H(X,\sigma)^\vee,
\end{tikzcd}
\end{equation}
where the first equality follows from the unimodularity of $H^2(X,\Z)$.
Since (\ref{surjHodge}) is surjective after tensoring with $\Q$, this induces the surjection $J(X)\twoheadrightarrow J(X,\sigma)$.
Then we define the variant of the regulator map by the composition
\begin{equation*}
\begin{tikzcd}
\nu_{\sqrt{-1}}:\CH^2(X,1) \arrow[r,"\nu"] & J(X) \arrow[r,twoheadrightarrow] &J(X,\sigma).
\end{tikzcd}
\end{equation*}
Since $\sigma$ is purely non-symplectic, we have the inclusion $H^{2,0}(X)\subset H(X,\sigma)_\C$.
Thus the surjection $J(X)\twoheadrightarrow H^{2,0}(X)^\vee/H_2(X,\Z)$ in (\ref{transreg}) factors $J(X,\sigma)$.
This implies the following.
\begin{prop}\label{reduce1}
For a $(2,1)$-cycle $\xi$, if $\Phi(\xi)$ is non-torsion, then $\nu_{\sqrt{-1}}(\xi)$ is so.
\end{prop}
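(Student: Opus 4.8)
The plan is to exploit the factorization of the transcendental regulator map through $J(X,\sigma)$ that was just observed. Since $\sigma$ is purely non-symplectic we have $H^{2,0}(X)\subset H(X,\sigma)_\C$, so the surjection $J(X)\twoheadrightarrow (H^{2,0}(X))^\vee/H_2(X,\Z)$ appearing in (\ref{transreg}) fits into a commutative triangle
\[
J(X)\twoheadrightarrow J(X,\sigma)\xrightarrow{\ q\ }(H^{2,0}(X))^\vee/H_2(X,\Z),
\]
where the first arrow is the surjection used to define $\nu_{\sqrt{-1}}$ and $q$ is a homomorphism of abelian groups. Composing this triangle with the regulator map $\nu$ and recalling that $\Phi$ and $\nu_{\sqrt{-1}}$ are both obtained from $\nu$ by applying the respective projections, one obtains the identity $\Phi(\xi)=q\bigl(\nu_{\sqrt{-1}}(\xi)\bigr)$ for every $(2,1)$-cycle $\xi$ on $X$.

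Granting this, the proposition follows by contraposition. A homomorphism of abelian groups sends torsion elements to torsion elements: if $\nu_{\sqrt{-1}}(\xi)$ were torsion, say $N\cdot\nu_{\sqrt{-1}}(\xi)=0$ for some positive integer $N$, then $N\cdot\Phi(\xi)=q\bigl(N\cdot\nu_{\sqrt{-1}}(\xi)\bigr)=0$, so $\Phi(\xi)$ would be torsion. Hence $\Phi(\xi)$ non-torsion forces $\nu_{\sqrt{-1}}(\xi)$ non-torsion.

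The only thing that genuinely needs to be checked is the existence and well-definedness of $q$, i.e.\ that the projection $J(X)\to (H^{2,0}(X))^\vee/H_2(X,\Z)$ kills the kernel of $J(X)\to J(X,\sigma)$. This is a routine unwinding of the definitions: both targets are built from the inclusion $H(X,\sigma)\hookrightarrow H^2(X,\Z)$ of primitive Hodge substructures (via (\ref{surjHodge})) together with the inclusion $H^{2,0}(X)\subset H(X,\sigma)_\C$, and Poincar\'e duality identifies the relevant dual spaces and lattices compatibly, so the projection onto $(H^{2,0}(X))^\vee/H_2(X,\Z)$ manifestly factors through $J(X,\sigma)$. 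I do not anticipate any real obstacle here: once this factorization is in place the argument is purely formal, and the entire content of the statement is the geometric input that the holomorphic $2$-form lies in the eigenpart $H(X,\sigma)_\C$, i.e.\ the pure non-symplecticity of $\sigma$.
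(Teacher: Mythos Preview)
Your proposal is correct and follows essentially the same approach as the paper: the paper simply remarks, immediately before the proposition, that $H^{2,0}(X)\subset H(X,\sigma)_\C$ forces the surjection $J(X)\twoheadrightarrow (H^{2,0}(X))^\vee/H_2(X,\Z)$ to factor through $J(X,\sigma)$, and declares the proposition a direct consequence. Your write-up spells out the contrapositive step and the well-definedness of the factoring map $q$ in more detail than the paper does, but the content is identical.
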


Let $\pi: (\X,\sigma)\rightarrow S$ be a family of $K3$ surfaces with purely non-symplectic automorphisms of order 4, i.e., $\pi:\X\rightarrow S$ is a family of $K3$ surfaces and $\sigma: \X\rightarrow \X$ is an $S$-automorphism of order $4$ such that for every $s\in S$, $\sigma_s$ is purely non-symplectic.
We define the variation of Hodge structure $\mathcal H(\X,\sigma)$ as the kernel of $(\sigma^2)^*+\id: R^2\pi_*\Z_\X\rightarrow R^2\pi_*\Z_\X$ and $\mathcal H(\X,\sigma)^\vee$ as its dual.
We have the morphism $R^2\pi_*\Z_{\X}\rightarrow  \mathcal H(\X,\sigma)^\vee$ as in (\ref{surjHodge}).

Let $\J(\X,\sigma) = \J(\mathcal H(\X,\sigma)^\vee)$ be the family of generalized complex tori over $S$ associated with $\mathcal H(\X,\sigma)^\vee$.
The fiber of $\J(\X,\sigma)\rightarrow S$ over $s\in S$ is $J(\X_s,\sigma_s)$.
The morphism $R^2\pi_*\Z_{\X}\rightarrow  \mathcal H(\X,\sigma)^\vee$ induces the surjection 
\begin{equation}\label{surjJac}
\begin{tikzcd}
\J(X)\arrow[r,twoheadrightarrow] & \J(\X,\sigma).
\end{tikzcd}
\end{equation}
For an algebraic family of $(2,1)$-cycles $\xi = \{\xi_s\}_{s\in S}$,
\begin{equation*}
\begin{tikzcd}
S\arrow[r] & \J(\X,\sigma); s\arrow[r,mapsto] &\nu_{\sqrt{-1}}(\xi_s)
\end{tikzcd}
\end{equation*}
is the composition of $\nu(\xi):S\rightarrow \J(\X)$ and the projection (\ref{surjJac}), so this is holomorphic.
This section is denoted by $\nu_{\sqrt{-1}}(\xi)$.
By definition, we have the following.
\begin{prop}\label{sectionlem}
For an algebraic family of $(2,1)$-cycles $\xi = \{\xi_s\}_{s\in S}$, if there exists a point $s_0\in S$ such that $\nu_{\sqrt{-1}}(\xi_{s_0})$ is non-torsion, $\nu_{\sqrt{-1}}(\xi)$ is so. 
\end{prop}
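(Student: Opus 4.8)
The plan is to treat this as a formal consequence of the fact that the group structure on the family of Jacobians is defined fiberwise. First I would record that $\Gamma(S,\J(\X,\sigma))$, the set of holomorphic sections of $\J(\X,\sigma)\to S$, is an abelian group under pointwise addition with the zero section as identity, and that by the construction preceding the statement $\nu_{\sqrt{-1}}(\xi)$ lies in this group, being the composition of the normal function $\nu(\xi):S\to\J(\X)$ with the projection (\ref{surjJac}). Next I would observe that for each $s_0\in S$ the evaluation map $\ev_{s_0}:\Gamma(S,\J(\X,\sigma))\to J(\X_{s_0},\sigma_{s_0})$, $\eta\mapsto\eta(s_0)$, is a homomorphism of abelian groups: the group law on $\J(\X,\sigma)$ and that on each fiber $J(\X_s,\sigma_s)=J(\mathcal H(\X,\sigma)^\vee_s)$ are both induced from the $\Z$-module structure of $\mathcal H(\X,\sigma)^\vee$, so in particular $\ev_{s_0}(N\cdot\eta)=N\cdot\eta(s_0)$ for all $N\in\Z$ and all $\eta$.

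Then I would argue by contraposition. Suppose $\nu_{\sqrt{-1}}(\xi)$ is torsion, say $N\cdot\nu_{\sqrt{-1}}(\xi)=0$ in $\Gamma(S,\J(\X,\sigma))$ for some positive integer $N$. Applying $\ev_{s_0}$ yields $N\cdot\nu_{\sqrt{-1}}(\xi_{s_0})=\ev_{s_0}(N\cdot\nu_{\sqrt{-1}}(\xi))=\ev_{s_0}(0)=0$, so $\nu_{\sqrt{-1}}(\xi_{s_0})$ is torsion, contradicting the hypothesis. This gives the claim.

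I do not expect any genuine obstacle here: once $\nu_{\sqrt{-1}}(\xi)$ has been set up as an honest holomorphic section (done just above), the proposition is essentially the remark that evaluation at a point is additive. The only subtlety worth flagging is that ``torsion'' for a section must be read with respect to the fiberwise group law — that is, $N\cdot\nu_{\sqrt{-1}}(\xi)=0$ as a section means $N\cdot\nu_{\sqrt{-1}}(\xi_s)=0$ for every $s$ — and this is immediate from the definition of $\J(\mathcal H(\X,\sigma)^\vee)$. The real content of the overall strategy lies not in this step but in producing a single member $(\X_{s_0},\sigma_{s_0})$ with $\nu_{\sqrt{-1}}(\xi_{s_0})$ non-torsion; that will be carried out in Section 5 by the explicit computation of the transcendental regulator map, together with Proposition \ref{reduce1} and Lemma \ref{importantlemma}.
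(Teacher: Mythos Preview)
Your argument is correct and matches the paper's approach: the paper simply states that the proposition holds ``by definition,'' since $\nu_{\sqrt{-1}}(\xi)$ is constructed so that its value at each $s$ is $\nu_{\sqrt{-1}}(\xi_s)$, making the torsion condition fiberwise. You have spelled out in full the contrapositive that the paper leaves implicit.
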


\subsection{Kond\=o's family}
We recall the example discovered by Kond\=o (\cite{Kondo}).
Let $C$ be a smooth plane quartic.
Let $X\rightarrow \P^2$ be the degree 4 cover ramified along $C$.
Then $X$ is a $K3$ surface with the covering transformation $\sigma$.
Replacing $\sigma$ by $\sigma^{-1}$ if necessary, we may assume that $\sigma^*$ acts on $H^{2,0}(X)$ as $\sqrt{-1}$.

Since the quotient $X/\langle \sigma^2\rangle$ is a del Pezzo surface, 
\begin{equation*}
L(X,\sigma) = H^2(X,\Z)^{\sigma^2} = H^2(X/\langle \sigma^2\rangle,\Z)(2)
\end{equation*}
is generated by algebraic cycles.
Thus we have 
\begin{equation}\label{NScontain}
L(X,\sigma)\subset \NS(X).
\end{equation}
Let $\Lambda_{K3}$ be the $K3$ lattice $U^{\oplus 3}\oplus E_8^{\oplus 2}$ and $\alpha: H^2(X,\Z)\rightarrow \Lambda_{K3}$ be a marking, i.e., an isometry between lattices.
Let $\rho$ be the isometry on $\Lambda_{K3}$ defined by $\alpha\circ \sigma^*\circ \alpha^{-1}$.
We consider the eigenspace decomposition $(\Lambda_{K3})_\C=\Lambda_{1}\oplus \Lambda_{-1}\oplus \Lambda_{\sqrt{-1}}\oplus \Lambda_{-\sqrt{-1}}$ with respect to the $\rho$-action and define the sublattice $H$ of $\Lambda_{K3}$ by $\left(\Lambda_{\sqrt{-1}}\oplus \Lambda_{-\sqrt{-1}}\right)\cap \Lambda_{K3}$.
The lattice $H$ is isomorphic to $U\oplus U(2) \oplus D_4^{\oplus 2} \oplus A_1^{\oplus 2}$ (\cite{Kondo2} p. 2833).
In particular, the signature of $H$ is $(2,12)$.
The isometry $\alpha$ induces an isometry $H(X,\sigma)\xrightarrow{\:\sim\:} H$.

Let $\B$ be the 6-dimensional complex ball defined by
\begin{equation*}
\B = \{v\in \P(\Lambda_{\sqrt{-1}}):\langle v,\overline{v}\rangle > 0\}.
\end{equation*}
Since $H^{2,0}(X)$ is contained in $\Lambda_{\sqrt{-1}}$ by the convention, $\B$ can be regarded as the period domain.
Let $\Gamma$ be a discrete group
\begin{equation}\label{group}
\Gamma = \{\gamma \in O(H):\gamma\circ \rho = \rho\circ \gamma\}.
\end{equation}
If we regard $H$ as a $\Z[\sqrt{-1}]$-module by the $\rho$-action, the group $\Gamma$ can be identified with the unitary group over $\Z[\sqrt{-1}]$ with respect to the Hermitian form $h(x,y) = \sqrt{-1}\langle x,\rho(y)\rangle + \langle x, y\rangle$.

Since $\Lambda_{\sqrt{-1}}=H\otimes_{\Z[\sqrt{-1}]}\C$, the group $\Gamma$ naturally acts on $\B$.
This action is properly discontinuous, and the period map induces the isomorphism between the moduli space $\mathcal M$ of smooth plane quartics and a Zariski open subset of $\B/\Gamma$ (\cite{Kondo} Theorem 2.5). 
In particular, since the period map is dominant, we have the following.
\begin{prop}\label{NSprop}
For a very general $C\in \mathcal M$, the pair $(X,\sigma)$ attached to $C$ satisfies $L(X,\sigma)=\NS(X)$.
\end{prop}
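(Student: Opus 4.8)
The plan is to exploit the fact, established just before the statement, that the period map
$\mathcal M \to \B/\Gamma$
identifies $\mathcal M$ with a Zariski open subset of $\B/\Gamma$, together with the inclusion (\ref{NScontain}), namely $L(X,\sigma)\subset \NS(X)$. Since $L(X,\sigma)$ and $H(X,\sigma)$ are orthogonal complements inside the unimodular lattice $H^2(X,\Z)$ and $L(X,\sigma)$ is always contained in $\NS(X)$, the equality $L(X,\sigma)=\NS(X)$ is equivalent to the statement that $\NS(X)\cap H(X,\sigma)=0$, i.e.\ that the transcendental part of $H^2(X,\Z)$ relevant to the eigenspaces $H_{\pm\sqrt{-1}}$ contains no algebraic classes. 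By the Lefschetz $(1,1)$ theorem, a class $v\in H(X,\sigma)$ is algebraic precisely when it is orthogonal to $H^{2,0}(X)$. Under the marking $\alpha$ this translates to: the period point $[\omega]\in\B\subset\P(\Lambda_{\sqrt{-1}})$ lies in the hyperplane $v^\perp$ for some nonzero $v\in H$. So the set of "bad" periods is the union $\bigcup_{v\in H\setminus\{0\}}(v^\perp\cap\B)$ of countably many proper analytic (in fact linear-section) subsets of $\B$.

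The first step is therefore to make this reduction precise: for the pair $(X,\sigma)$ attached to $C$ with period point $p\in\B$, one has $L(X,\sigma)=\NS(X)$ if and only if $p\notin v^\perp$ for every $v\in H\setminus\{0\}$. This is where one uses (\ref{NScontain}) for the inclusion $\supseteq$ in the complement direction, the orthogonality of $L(X,\sigma)$ and $H(X,\sigma)$, and the Lefschetz theorem applied to the Hodge substructure $H(X,\sigma)$ whose $(2,0)$-part is $H^{2,0}(X)$ (by pure non-symplecticity). The second step is to observe that each $v^\perp\cap\B$ is a proper closed analytic subset of $\B$: it is proper because $H$ has signature $(2,12)$, so there certainly exist positive vectors in $\Lambda_{\sqrt{-1}}$ outside any single hyperplane, and closed analytic because it is cut out by a linear equation. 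Since $H$ is countable, the union over all $v$ is a countable union of proper analytic subsets, hence its complement is the set of very general points of $\B$.

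The third step is to transport this back to $\mathcal M$. Since the period map identifies $\mathcal M$ with a Zariski-open, hence dense, subset $\mathcal M_0$ of $\B/\Gamma$, and since $\Gamma$ acts properly discontinuously, the image in $\B/\Gamma$ of the countable union $\bigcup_v (v^\perp\cap\B)$ is again a countable union of proper analytic subsets; intersecting with $\mathcal M_0$ and pulling back along the period isomorphism, we get a countable union of proper analytic subsets of $\mathcal M$ outside of which $L(X,\sigma)=\NS(X)$. This is exactly the assertion for a very general $C\in\mathcal M$.

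I do not expect any serious obstacle here; the statement is essentially a packaging of the (by now standard) Noether–Lefschetz-type argument, with the only mildly delicate point being the bookkeeping that the "very general" locus is preserved under the period isomorphism and that $v^\perp\cap\B$ is genuinely proper — the latter being immediate from the signature $(2,12)$ of $H$. If one wants to be careful, the mild subtlety is that one should check $H^{2,0}(X)$ spans a genuine line in $\Lambda_{\sqrt{-1}}$ moving in $\B$ as $C$ varies (so that the generic period avoids each fixed hyperplane), which follows from the dominance/openness of the period map already quoted from \cite{Kondo}.
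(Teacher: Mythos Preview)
Your proposal is correct and is precisely the standard Noether--Lefschetz argument that the paper is invoking when it says, just before the statement, ``since the period map is dominant, we have the following''; the paper gives no further proof, and you have simply written out the details it leaves implicit. One very minor point: in your reduction step ``$L(X,\sigma)=\NS(X)$ is equivalent to $\NS(X)\cap H(X,\sigma)=0$'', orthogonality of $L$ and $H$ together with $L\subset\NS$ is not quite enough on its own---you are tacitly using that $\NS(X)$ is $\sigma^*$-stable (hence splits over $\Q$ along the $\pm1$-eigenspaces of $(\sigma^2)^*$), which is immediate since $\sigma$ is an automorphism of $X$.
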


\subsection{The specialization argument}
Let $(\X,\sigma)\rightarrow S$ be a family of $K3$ surfaces with purely non-symplectic automorphisms of order 4.
Suppose that we have a dominant morphism $p:S\rightarrow \mathcal M$ to the moduli space of smooth plane quartics such that, for any  $s\in S$, $(\X_s,\sigma_s)$ is isomorphic to the pair attached to the quartic $p(s)\in \mathcal M$.

\begin{prop}\label{nonzeroprop}
Let $\xi=\{\xi_s\}_{s\in S}$ be an algebraic family of $(2,1)$-cycles on $\X\rightarrow S$.
Suppose that $\nu_{\sqrt{-1}}(\xi)$ is non-torsion.
Then $(\xi_s)_\ind$ and $(\sigma_*(\xi_s))_\ind$ generate a rank 2 subgroup in $\CH^2(\X_s,1)_\ind$ for a very general $s\in S$.
\end{prop}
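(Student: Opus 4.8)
The plan is to propagate the non-torsion statement from the generic fiber down to a very general fiber, and then to translate a non-torsion statement about $\nu_{\sqrt{-1}}$ into a rank statement about the indecomposable part. First I would use Proposition \ref{sectionlem}: since $\nu_{\sqrt{-1}}(\xi)$ is non-torsion as a normal function (section of $\J(\X,\sigma)\to S$), there is at least one $s_0$ with $\nu_{\sqrt{-1}}(\xi_{s_0})$ non-torsion; but in fact I want more, namely that $\nu_{\sqrt{-1}}(\xi_s)$ is non-torsion for \emph{very general} $s$. For this I would argue that the locus of $s\in S$ where $\nu_{\sqrt{-1}}(\xi_s)$ is torsion of a fixed order $N$ is a proper analytic subset (the section minus an $N$-torsion section of the torus fibration), so the torsion locus is a countable union of proper analytic subsets; hence for very general $s$, $\nu_{\sqrt{-1}}(\xi_s)$ is non-torsion. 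The same reasoning must be applied to $\sigma_*(\xi)$, which is again an algebraic family of $(2,1)$-cycles with $\nu_{\sqrt{-1}}(\sigma_*(\xi_s)) = \sigma_{s,*}(\nu_{\sqrt{-1}}(\xi_s))$, so its normal function is the image of $\nu_{\sqrt{-1}}(\xi)$ under the automorphism $\sigma_*$ of $\J(\X,\sigma)$ and is therefore also non-torsion.

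Next I would use Proposition \ref{NSprop}: since $p:S\to\mathcal M$ is dominant, the preimage of the very general locus of $\mathcal M$ is the complement of a countable union of proper analytic subsets of $S$, so for very general $s$ we may additionally assume $L(\X_s,\sigma_s)=\NS(\X_s)$. On such a fiber $X=\X_s$, the key point is that $\nu_{\sqrt{-1}}$ kills the decomposable part: by \eqref{JNS} we have $\nu(\CH^2(X,1)_\dec)=\NS(X)\otimes(\C/\Z)$, and since $\NS(X)=L(X,\sigma)$ is orthogonal to $H(X,\sigma)$, the image of $\NS(X)\otimes(\C/\Z)$ under the projection $J(X)\twoheadrightarrow J(X,\sigma)=J(H(X,\sigma)^\vee)$ vanishes. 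Hence $\nu_{\sqrt{-1}}$ factors through $\CH^2(X,1)_\ind$, giving a well-defined homomorphism $\overline{\nu_{\sqrt{-1}}}:\CH^2(X,1)_\ind\to J(X,\sigma)$ whose value on $\xi_\ind$ (resp. $(\sigma_*\xi)_\ind$) is non-torsion.

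It remains to see that $(\xi_s)_\ind$ and $(\sigma_*\xi_s)_\ind$ are linearly independent over $\Z$ in $\CH^2(X,1)_\ind$, i.e. span a rank $2$ subgroup. Suppose $a(\xi_s)_\ind + b(\sigma_*\xi_s)_\ind$ is torsion for integers $a,b$ not both zero; applying $\overline{\nu_{\sqrt{-1}}}$, the element $a\cdot\nu_{\sqrt{-1}}(\xi_s) + b\cdot\sigma_{s,*}\nu_{\sqrt{-1}}(\xi_s)$ is torsion in $J(X,\sigma)$. Here I would exploit the $\Z[\sqrt{-1}]$-module structure on $J(X,\sigma)$: the automorphism $\sigma_{s,*}$ acts as multiplication by a fixed primitive fourth root of unity (coming from the $\sqrt{-1}$-normalization), so the relation reads $(a + b\sqrt{-1})\cdot\nu_{\sqrt{-1}}(\xi_s)$ is torsion, and since $a+b\sqrt{-1}\neq 0$ in $\Z[\sqrt{-1}]$ and $\Z[\sqrt{-1}]$ is an integral domain acting on the $\Q$-vector space $J(X,\sigma)\otimes\Q$ without zero divisors, multiplication by $a+b\sqrt{-1}$ is injective on the torsion-free quotient; thus $\nu_{\sqrt{-1}}(\xi_s)$ would be torsion, a contradiction. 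Therefore the subgroup has rank exactly $2$, and in particular $\CH^2(\X_s,1)_\ind$ has rank $\geq 2$. The main obstacle I anticipate is the very-general propagation of the torsion locus in the first paragraph: one must check that "torsion of order $N$" cuts out a genuinely proper analytic subvariety of $S$ — this uses that the normal function is not identically $N$-torsion, which follows from $\nu_{\sqrt{-1}}(\xi)$ being non-torsion as a section, together with the fact that a holomorphic section of a torus fibration agreeing with an $N$-torsion section on a non-discrete set must agree with it everywhere (by analyticity along the fibers and connectedness of $S$).
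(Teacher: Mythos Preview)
Your proof is correct and follows essentially the same route as the paper: you use Proposition~\ref{NSprop} together with the orthogonality of $L(X,\sigma)$ and $H(X,\sigma)$ to show $\nu_{\sqrt{-1}}$ annihilates the decomposable part for very general $s$, and you use the $\Z[\sqrt{-1}]$-module structure on $J(X,\sigma)$ (via $\sigma_*$) to upgrade ``non-torsion'' to ``rank $2$''. The only organizational difference is that the paper packages steps 1 and 4 together by working with the subgroup $N\subset\Gamma(S,\J(\X,\sigma))$ generated by $\nu_{\sqrt{-1}}(\xi)$ and $\nu_{\sqrt{-1}}(\sigma_*\xi)$, observes that $N$ is a cyclic $\Z[\sqrt{-1}]$-module which is non-torsion and hence free of rank $2$, and then argues that the restriction $N\to N_s$ is bijective for very general $s$ (countably many distinct holomorphic sections remain distinct off a countable union of proper analytic subsets); this is exactly your torsion-locus argument and your $\Q[\sqrt{-1}]$-module argument, just carried out at the level of sections rather than fiberwise.
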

\begin{proof}
For $s\in S$, let $\Xi_s$ be the subgroup of $\CH^2(\X_s,1)$ generated by $\xi_s$ and $\sigma_*(\xi_s)$ and let $N_s$ be the subgroup of $J(\X_s, \sigma_s)$ generated by $\nu_{\sqrt{-1}}(\xi_s)$ and $\nu_{\sqrt{-1}}(\sigma_*(\xi_s))$.
By definition, we have $\nu_{\sqrt{-1}}(\Xi_s) = N_s$.

First, we show that $\nu_{\sqrt{-1}}$ factors $\CH^2(\X_s,1)_\ind$ for a very general $s\in S$.
To prove this, it suffices to show that $\nu_{\sqrt{-1}}$ annihilates the decomposable part for a very general $s\in S$.
By (\ref{JNS}), the image of $\CH^2(\X_s,1)_\dec$ under the regulator map is $\NS(\X_s)\otimes_\Z (\C/\Z)$.
By Proposition \ref{NSprop} and the assumption that $p: S\rightarrow \mathcal M$ is dominant, this coincides with $L(\X_s,\sigma_s)\otimes_\Z (\C/\Z)$ for a very general $s\in S$.
Since $L(\X_s,\sigma_s)$ and $H(\X_s,\sigma_s)$ are orthogonal to each other, $L(\X_s,\sigma_s)\otimes_\Z (\C/\Z)$ is mapped to $0$ by the projection $J(\X_s)\twoheadrightarrow J(\X_s,\sigma_s)$.
Thus $\nu_{\sqrt{-1}}$ annihilates $\CH^2(\X_s,1)_\dec$ for a very general $s\in S$ and we finish the first step.

Since $\nu_{\sqrt{-1}}(\Xi_s) = N_s$, we reduce the proof to showing that $N_s$ is rank 2 for a very general $s\in S$.
Let $N$ be the subgroup of $\Gamma(S,\J(\X,\sigma))$ generated by $\nu_{\sqrt{-1}}(\xi)$ and $\nu_{\sqrt{-1}}(\sigma_*(\xi))$.
Then for each $s\in S$, $N_s$ is the image of $N$ under the restriction map $\Gamma(S,\J(\X,\sigma))\rightarrow J(\X_s,\sigma_s)$.
Since $N$ is a set of countably many holomorphic sections, $N\twoheadrightarrow N_s$ is bijective for a very general $s\in S$.
Therefore, it is sufficient to prove that $N$ is of rank 2.

We have a $\Z[\sqrt{-1}]$-module structure on $\Gamma(S,\J(\X,\sigma))$ by the $\sigma^*$-action and $N$ is its sub-$\Z[\sqrt{-1}]$-module generated by $\nu_{\sqrt{-1}}(\xi)$.
For any positive integer $m$, $m\cdot\nu_{\sqrt{-1}}(\xi)$ does not vanish by the assumption. 
Thus $N$ is a free $\Z[\sqrt{-1}]$-module.
In particular, $N$ is of rank 2 as an abelian group and we complete the proof.
\end{proof}

\begin{rem}\label{abelianvar}
Since $H^{2,0}(\X_s)$ is contained in $H_{\sqrt{-1}}$ by the convention, the projection $H_{\sqrt{-1}}\oplus H_{-\sqrt{-1}}\twoheadrightarrow H_{-\sqrt{-1}}$ induces the surjective map
\begin{equation}\label{abrem}
\begin{tikzcd}
J(\X_s,\sigma_s) = \dfrac{H_{\sqrt{-1}}\oplus H_{-\sqrt{-1}}}{H(\X_s,\sigma_s)^\vee+H^{2,0}(\X_s)} \arrow[r,twoheadrightarrow] & 
\dfrac{H_{-\sqrt{-1}}}{H(\X_s,\sigma_s)^\vee} \simeq \dfrac{\Lambda_{-\sqrt{-1}}}{H^\vee}
\end{tikzcd} 
\end{equation}
between generalized complex tori.
By the $\rho$-action, $H^\vee$ can be regarded as a free $\Z[\sqrt{-1}]$-module of rank 7.
Then we have\footnote{The embedding $\tau$ is the conjugate of the natural embedding.} $\Lambda_{-\sqrt{-1}} \simeq H^\vee\otimes_{\Z[\sqrt{-1}],\tau}\C$ and the target of (\ref{abrem}) is isomorphic to the direct product of the elliptic curve $(\C/\Z[\sqrt{-1}])^{\oplus 7}$.
Furthermore, the fiber of (\ref{abrem}) is isomorphic to the 6-dimensional complex vector space $H_{\sqrt{-1}}/H^{2,0}(\X_s)$.
Therefore  $J(\X_s,\sigma_s)$ is isomorphic to a 6-dimensional holomorphic vector bundle over the 7-dimensional abelian variety as a complex manifold.
It seems to be a new phenomenon that a kind of the Jacobian of $K3$ surfaces has such a structure.
\end{rem}

\section{Higher Chow cycles attached to pairs of bitangents of quartics}
In this section, we define the family of $K3$ surfaces and construct a family of $(2,1)$-cycles on them.
The key to our construction of cycles is to find rational curves on $X/\sigma = \P^2$ which intersect with the ramification locus ($=$ a plane quartic) in a special way.
This construction is analogous to the constructions used in \cite{Sre14},  \cite{MS23} and \cite{Sre24}, where the degree 2 cover cases are considered.
In our case, bitangents of plane quartics satisfy the desired property.

\subsection{Construction of the $K3$ family} 
Let $f\in \C[Z_0,Z_1,Z_2]$ be a quartic form (a homogeneous polynomial of degree 4) such that the zero locus of $f$ in $\P^2$ is a smooth plane quartic $C$.
Let $X$ be the $K3$ surface defined by the equation $W^4=f(Z_0,Z_1,Z_2)$ in $\P^3$, where the homogeneous coordinates of $\P^3$ are $Z_0,Z_1,Z_2,W$.
The projection $\pi: X\rightarrow \P^2; [Z_0:Z_1:Z_2:W]\mapsto [Z_0:Z_1:Z_2]$ is a quadruple covering ramified along $C$, and the automorphism $\sigma:X\rightarrow X$ defined by 
\begin{equation*}
\sigma([Z_0:Z_1:Z_2:W]) = [Z_0:Z_1:Z_2:\sqrt{-1}W]
\end{equation*}
is a generator of the covering transformation group of $\pi$.

A \textit{bitangent} of a plane quartic $C$ is a line $l$ that is tangent to $C$ at every point in $C\cap l$.
It is well-known that $C$ has exactly 28 bitangents.
A bitangent $l$ of $C$ is called an \textit{inflection bitangent} if $l$ and $C$ intersect at one point. 
Let $l_0$ and $l_1$ be distinct bitangents of $C$ such that \textit{neither} is an inflection bitangent.
Let $P$ be the intersection point of $l_0$ and $l_1$ and $Q$ be a point in $\pi^{-1}(P)$.

\begin{defn}
A \textit{marked bitangent pair} is a quadruple $(f,l_0,l_1,Q)$ as above.
Let $S$ be an algebraic variety consisting of of all marked bitangent pairs.
We have the family $(\X,\sigma)\rightarrow S$ of $K3$ surfaces with purely non-symplectic automorphisms of order 4 and its section $Q$.
\end{defn}

For each  $(f,l_0,l_1,Q)\in S$, by assigning the quartic defined by $f$, we have a dominant (indeed surjective) morphism $S\rightarrow \mathcal{M}$ to the moduli space of smooth plane quartics.
The family $(\X,\sigma)\rightarrow S$ satisfies the condition at the beginning of Section 3.3.
The reason we consider the space of defining equations instead of plane quartics is to get a family of $K3$ surfaces $\X$ over the whole base space $S$.

\subsection{Construction of higher Chow cycles}
Let $s=(f,l_0,l_1,Q)\in S$ be a marked bitangent pair.
For $i=0,1$, the pull-back of $f$ by $l_i\hookrightarrow \P^2$ is a square of a homogeneous polynomial of degree 2.
Therefore the inverse image $\pi^{-1}(l_i)$ decomposes into 2 curves $l_{i,0}$ and $l_{i,1}$ such that $l_{i,j}\xrightarrow{\pi} l_i$ is the double covering branched along 2 points in $l_i\cap C$.
Furthermore, $l_{i,0}$ and $l_{i,1}$ are swapped by the automorphism $\sigma$.
For $i\in \Z/4\Z$, we define $Q_i = \sigma^i(Q)\in \pi^{-1}(P)$.
By changing the labels of $l_{i,j}$ if necessary, we may assume that $l_{0,0}$ and $l_{1,0}$ intersect at $Q_0$ and $Q_2$.
Then the configuration of the lines $l_{i,j}$ is as shown in Figure \ref{lfigure}.

\begin{figure}[h]
\centering
\includegraphics[width=10cm]{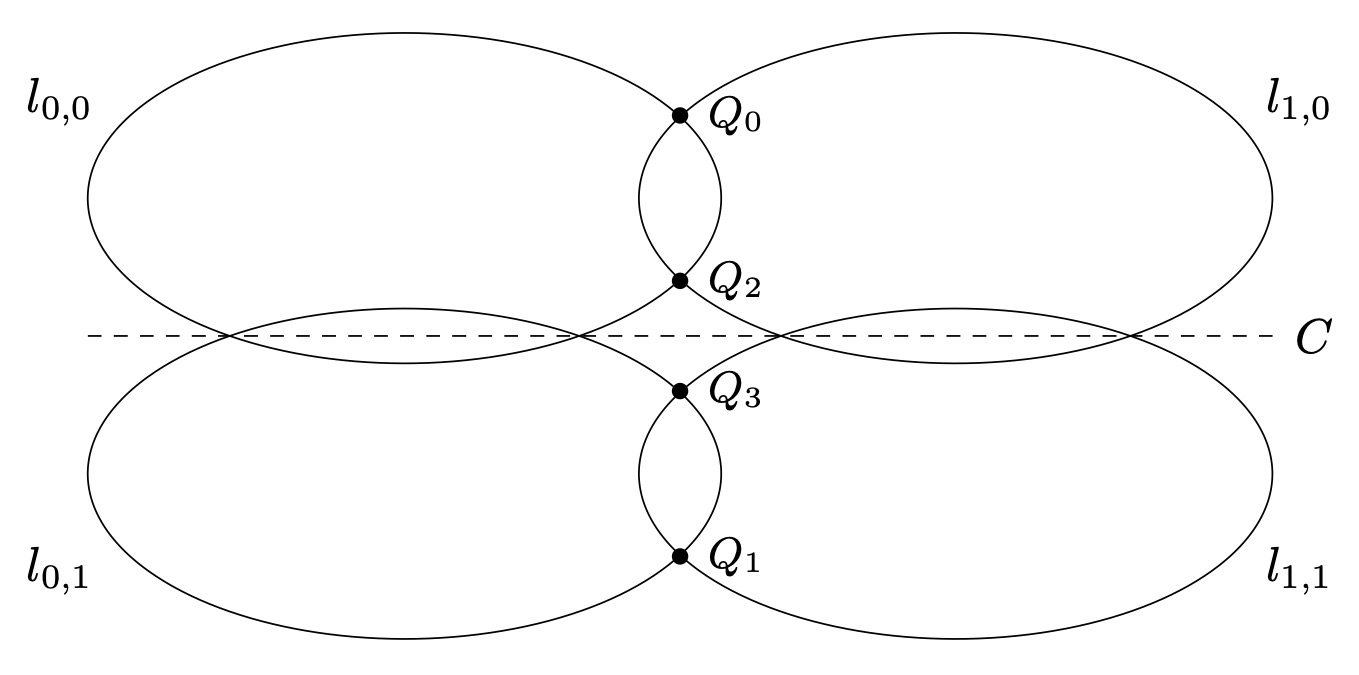}
\caption{The lines $l_{i,j}$ on $X$}
\label{lfigure}
\end{figure} 

Since $l_{0,0},l_{1,0}$ are rational curves, we can find rational functions $f_{0,0}\in \C(l_{0,0})^\times$ and $f_{1,0}\in \C(l_{1,0})^\times$ which satisfy
\begin{equation}\label{ratfunction1}
\div_{l_{0,0}}(f_{0,0}) = Q_2-Q_0 = -\div_{l_{1,0}}(f_{1,0}).
\end{equation}
\begin{defn}
Let $\xi_s$ be the $(2,1)$-cycle on $\X_s$ represented by the formal sum
\begin{equation}\label{cycle}
(l_{0,0},f_{0,0})+(l_{1,0},f_{1,0}).
\end{equation}
Note that we have freedom of the choices of $f_{i,0}$, but the ambiguity is only multiplication by some nonzero constant functions.
If we choose different rational functions satisfying (\ref{ratfunction1}), the resulting $(2,1)$-cycles differ by a decomposable cycle of the form $(l_{0,0},\alpha_{0})+(l_{1,0},\alpha_{1})$, where $\alpha_{0},\alpha_{1}\in\C^\times$.
In particular, $(\xi_{s})_\ind$ does not depend on the choice of rational functions.
Moreover, since $[l_{0,0}], [l_{1,0}]\in L(\X_s,\sigma_s)$, the ambiguity does not affect the image $\nu_{\sqrt{-1}}(\xi_s)\in J(\X_s,\sigma_s)$.
\end{defn}

The main theorem of this paper is as follows.
\begin{thm}\label{mainthm}
For a very general $s\in S$, $(\xi_s)_\ind$ and $(\sigma_*(\xi_s))_\ind$ generate a rank 2 subgroup in $\CH^2(\X_s,1)_\ind$.
In particular, $\rank \CH^2(\X_s,1)_\ind\ge 2$ for such a $s\in S$.
\end{thm}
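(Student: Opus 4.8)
The plan is to deduce Theorem~\ref{mainthm} from the specialization results of Section~3.3 together with an explicit evaluation of the transcendental regulator on a one-parameter subfamily. First I would check that (\ref{cycle}) really defines an algebraic family of $(2,1)$-cycles $\xi=\{\xi_s\}_{s\in S}$ in the sense of Section~2.3: after possibly shrinking $S$ and replacing it by an \'etale cover (to label the components $l_{i,0},l_{i,1}$ consistently), the curves $l_{0,0},l_{1,0}$ and the points $Q_0,Q_2$ spread out to subvarieties of $\X$ smooth over $S$, and the fiberwise divisor relation needed in Section~2.3 follows from (\ref{ratfunction1}) (such modifications do not affect a ``very general'' conclusion). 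Granting this, Proposition~\ref{nonzeroprop} reduces the theorem to showing that the normal function $\nu_{\sqrt{-1}}(\xi)$ is non-torsion; by Proposition~\ref{sectionlem} it suffices to exhibit a single $s_0\in S$ with $\nu_{\sqrt{-1}}(\xi_{s_0})$ non-torsion, and by Proposition~\ref{reduce1} it is enough to find $s_0$ with $\Phi(\xi_{s_0})$ non-torsion.

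I would then pass to a one-dimensional subfamily. Let $\{C_\lambda\}$ be the one-parameter family of plane quartics containing the Fermat quartic $F$ referred to in the introduction; each $C_\lambda$ carries a $\mu_4$-action, and by choosing a suitable pair of non-inflection bitangents of $C_\lambda$ and a marked point over their intersection, all varying algebraically in $\lambda$, one obtains a morphism $T\to S$ from a smooth curve $T$ with coordinate $\lambda$ and (after shrinking $T$ if necessary) a restricted algebraic family $\xi|_T$ of $(2,1)$-cycles on the pullback family $\X_T\to T$. Fix a nowhere-vanishing relative $2$-form $\omega$ on $\X_T$. Applying Lemma~\ref{importantlemma} to $m\cdot\nu_{\tr}(\xi|_T)(\omega)$ for each integer $m\geq1$ and intersecting the resulting countably many ``very general'' loci, one reduces to proving $m\cdot\nu_{\tr}(\xi|_T)(\omega)\neq0$ in $\Gamma(T,\Qc_\omega)$ for all $m\geq1$; together with the identification of the target of $\Phi$ over a $K3$ surface with $\C/\Pc(\omega_s)$ (Section~2.4) and with Proposition~\ref{reduce1}, this produces the desired $s_0\in T\subset S$.

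To control $\Qc_\omega$ on $T$ I would use the rational map $\varphi$ from $C_\lambda\times F$ to $X_\lambda$ inducing a birational equivalence $(C_\lambda\times F)/\mu_4\simeq X_\lambda$. Pulling $\omega_\lambda$ back along $\varphi$ identifies it, up to a scalar, with a product $\omega_{C_\lambda}\wedge\omega_F$ of holomorphic $1$-forms; hence period functions with respect to $\omega$ on $T$ are, up to the fixed periods of $F$, periods of the $\mu_4$-eigenform $\omega_{C_\lambda}$ on $H^1(C_\lambda)$, and a standard computation shows they satisfy the Gauss hypergeometric equation of type $(1/2,1/2,1)$, with homogeneous Picard--Fuchs operator $\mathcal L=\lambda(1-\lambda)\frac{d^2}{d\lambda^2}+(1-2\lambda)\frac{d}{d\lambda}-\frac14$. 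In particular every period function with respect to $\omega$ is, locally on $T$, a solution of $\mathcal L(\cdot)=0$.

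Finally I would carry out the regulator computation. By (\ref{transregval}), $\nu_{\tr}(\xi|_T)(\omega)$ is represented by the multivalued function $G(\lambda)=\int_{\Gamma_\lambda}\omega_\lambda$, where $\Gamma_\lambda$ is a $2$-chain with $\partial\Gamma_\lambda=\gamma_\lambda$ and $\gamma_\lambda$ is assembled from the pullbacks of the path $[\infty,0]$ under $f_{0,0}$ and $f_{1,0}$ as in Section~2.2. Transporting $\Gamma_\lambda$ and $\gamma_\lambda$ through $\varphi$ and the $\mu_4$-quotient rewrites $G(\lambda)$ as an explicit membrane integral on $C_\lambda\times F$; differentiating under the integral sign and using that $\mathcal L$ annihilates periods taken over closed cycles, the only surviving contribution comes from the boundary $\partial\Gamma_\lambda=\gamma_\lambda$, and evaluating it yields precisely the inhomogeneous equation (\ref{inhomogPF}), that is $\mathcal L G=\frac{1}{2-\lambda}\sqrt{-2/\lambda}$, whose right-hand side is not identically zero. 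Since then $\mathcal L(mG)=\frac{m}{2-\lambda}\sqrt{-2/\lambda}\neq0$ for every $m\geq1$, no nonzero multiple $mG$ solves $\mathcal L(\cdot)=0$, so $mG$ is not a period function and $m\cdot\nu_{\tr}(\xi|_T)(\omega)\neq0$ in $\Gamma(T,\Qc_\omega)$; this completes the argument. The main obstacle is exactly this last step: identifying $\Gamma_\lambda$ concretely, tracking it faithfully through $\varphi$ and the $\mu_4$-quotient, and performing the Picard--Fuchs computation carefully enough to obtain the precise inhomogeneous term of (\ref{inhomogPF}) and, in particular, its non-vanishing. A secondary delicate point is verifying that $\varphi$ has the claimed properties and is compatible with the bitangent pair, the automorphism $\sigma$, and the marked point.
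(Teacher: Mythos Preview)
Your proposal is correct and follows essentially the same route as the paper: reduce via Propositions~\ref{nonzeroprop}, \ref{sectionlem}, and \ref{reduce1} to showing $\Phi(\xi_{s_0})$ is non-torsion at some point, restrict to the one-parameter subfamily, identify the Picard--Fuchs operator via the product structure coming from $\varphi$, and show that the regulator integral $G$ satisfies the inhomogeneous equation (\ref{inhomogPF}). The only notable difference is in the execution of the last step: rather than transporting the $2$-chain to $C_\lambda\times F$ as you propose, the paper first writes $G$ as an explicit real integral over a region $\Gamma^\circ\subset\P^2$, then passes further to an auxiliary elliptic curve $E_\lambda$ (via a $4{:}1$ cover $\psi:C_t\to E_\lambda$) and finally to $\P^1\times\P^1$ via an explicit rational map $\tau$, which separates the variables and makes the Stokes computation of $\mathcal L G$ completely elementary.
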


Since $\sigma(l_{i,0})=l_{i,1}$, $\sigma_*(\xi_s)$ coincides with the $(2,1)$-cycle constructed from two rational curves $l_{0,1},l_{1,1}$ intersecting at 2 points $Q_1$ and $Q_3$. 

We will explain that Zariski locally on $S$, $\xi=\{\xi_s\}_{s\in S}$ can be regarded as an algebraic family.
First, we note that $\pi^{-1}(l_{i})\rightarrow S \:\:(i=0,1)$ decomposes into two irreducible components at each fiber, and we have a section $Q:S\rightarrow \pi^{-1}(l_{i})$ contained only one of these components, so $\pi^{-1}(l_{i})\rightarrow S$ decomposes into two $\P^1$-bundles $l_{i,0}\rightarrow S$ and $l_{i,1}\rightarrow S$, where they are labeled so that the section $Q$ is contained in $l_{i,0}$.

For $s\in S$, we can take a Zariski open neighborhood $U$ of $s$ such that $l_i|_U\rightarrow U$ is a trivial $\P^1$-bundle.
Then $l_{i,0}|_U\rightarrow U$ is a family of conics with a section $Q$, hence it also becomes a trivial $\P^1$-bundle over $U$ for $i=0,1$.
Therefore, we can find rational functions $f_{0,0}\in \C(l_{0,0})^\times$ and $f_{1,0}\in \C(l_{1,0})^\times$ such that their zeros and poles are $Q_0=Q$ and $Q_2=\sigma^2(Q)$ (hence smooth over $U$) and their restrictions to each fiber satisfy the relation (\ref{ratfunction1}).

Thus we have an algebraic family $\xi_{U} =\{\xi_{s}\}_{s\in U}$ of $(2,1)$-cycles on $\X|_U\rightarrow U$.
By gluing the normal functions $\nu_{\sqrt{-1}}(\xi_U)$, we have the section $\nu_{\sqrt{-1}}(\xi)$ of $\J(\X,\sigma)\rightarrow S$.
By Proposition \ref{nonzeroprop}, the proof of Theorem \ref{mainthm} is reduced to showing that $\nu_{\sqrt{-1}}(\xi)$ is non-torsion.

By Proposition \ref{sectionlem}, it suffices to find a point $s_0\in S$ such that $\nu_{\sqrt{-1}}(\xi_{s_0})$ is non-torsion.
Then it is enough to find a point $s_0\in S$ such that the image of $\xi_{s_0}$ under the transcendental regulator map is non-torsion by Proposition  \ref{reduce1}.
To compute $\Phi(\xi_{s_0})$ for certain $s_0$, we will define a special 1-dimensional subfamily $\X_T\rightarrow T$ of $\X\rightarrow S$ whose period functions can be computed explicitly. 

\section{The special family}
In this section, we define the special 1-dimensional subfamily $\X_T\rightarrow T$ and  prove that $\nu_{\sqrt{-1}}(\xi_{t})$ is non-torsion for a very general $t\in T$ by computing the image of $\xi_t$ under the transcendental regulator map.

\subsection{Special plane quartics}
For $t\in \C-\{0,-1/2\}$, we define the separable cubic polynomial $f_t(z)$ by 
\begin{equation*}
f_t(z) = (2z+t)(2z^2-2z-t) = z^4-(z^2-2z-t)^2.
\end{equation*}
Then we define the quartic form $g_t(Z_0,Z_1,Z_2)$ by
\begin{equation*}
g_t(Z_0,Z_1,Z_2) = Z_2^4 - Z_0^4f_t\left(\frac{Z_1}{Z_0}\right) = Z_2^4-Z_0\left(2Z_1+tZ_0\right)\left(2Z_1^2-2Z_1Z_0-tZ_0^2\right).
\end{equation*}
Let $C_t$ be the smooth plane quartic defined by $g_t$.
The lines $l_0:Z_2=Z_1$ and $l_1:Z_2=-Z_1$ are bitangents of $C_t$.
If $t\neq -1$, neither is an inflection bitangent.
Let $\X_t\subset \P^3$ be the $K3$ surface defined by $W^4=g_t(Z_0,Z_1,Z_2)$ and $\pi: \X_t\rightarrow \P^2$ be the quadruple covering.
Using the inhomogeneous coordinates $z_1 = Z_1/Z_0, z_2=Z_2/Z_0, w= W/Z_0$, the defining equation for $\X_t$ is given by 
\begin{equation*}
w^4 = z_2^4 - f_t(z_1).
\end{equation*}
The point $(z_1,z_2,w)=(0,0,\sqrt{t})\in \X_t$ is in the inverse image of $P = l_0\cap l_1$ by $\pi$.
We denote this point by $Q$.

Let $T=\Spec\C\left[t,\dfrac{1}{t(t+1)(2t+1)},\sqrt{t}\right]$.
By the construction above, we have a marked bitangent pair $(g_t,l_0,l_1,Q)$ for each $t\in T$, so we have a morphism $T\rightarrow S$.
In the following we consider the special 1-dimensional family $(\X_T,\sigma_T)\rightarrow T$ which is the base change of $(\X,\sigma)\rightarrow S$.

\subsection{A rational map}
We will define a rational map from a product of curves to $\X_t$.
This map is used to calculate the periods of $\X_t$.
Let $F$ be the Fermat quartic defined by $U_2^4=U_1^4-U_0^4$.
We define the rational map $\varphi$ by
\begin{equation*}
\begin{tikzcd}[row sep = tiny]
&\varphi:&[-70pt]C_t\times F \arrow[r, dashrightarrow] & \X_t\\
&&([V_0:V_1:V_2],[U_0:U_1:U_2])\arrow[r,mapsto] \aru&  \text{[}Z_0:Z_1:Z_2:W\text{]}=[U_0V_0:U_0V_1:U_1V_2:U_2V_2]\aru,
\end{tikzcd}
\end{equation*}
where the coordinates $[V_0:V_1:V_2]$ of $C_t$ satisfy $g_t(V_0,V_1,V_2)=0$.
Using the inhomogeneous coordinates, we can describe $\varphi$ as $(v_1,v_2,u_1,u_2)\mapsto (z_1,z_2,w)=(v_1,u_1v_2,u_2v_2)$, where $v_1 = V_1/V_0$, $v_2 = V_2/V_0$, $u_1 = U_1/U_0$ and $u_2 = U_2/U_0$.
The indeterminacy locus of $\varphi$ consists of 16 points defined by $U_0=V_2=0$. 
Let $b: Y_t\rightarrow C_t\times F$ be the blow up along these 16 points.
Then we have the morphism $\widetilde{\varphi}:Y_t\rightarrow X_t$ such that
\begin{equation*}
\begin{tikzcd}
Y_t \arrow[d,"b"']\arrow[dr,"\widetilde{\varphi}"]&\\
C_t\times F \arrow[r,dashrightarrow,"\varphi"'] & \X_t
\end{tikzcd}
\end{equation*}
commutes.
We will show that $\widetilde{\varphi}$ is generically $4:1$.
Consider the following $\mu_4$-action on $C_t\times F$.
\begin{equation*}
\begin{tikzcd}[row sep =tiny]
\mu_4\times (C_t\times F)\arrow[r]  & C_t\times F \\
(\zeta, [V_0:V_1:V_2],[U_0:U_1:U_2])\aru \arrow[r,mapsto]  & ([V_0:V_1:\zeta^{-1}V_2],[U_0:\zeta U_1: \zeta U_2])\aru
\end{tikzcd}
\end{equation*}
Then this $\mu_4$-action on $C_t\times F$ lifts to $Y_t$.
By calculating the rings of invariants under the $\mu_4$-action, we see that $\widetilde{\varphi}:Y_t\rightarrow X_t$ factors as follows.
\begin{equation}\label{gen41}
\begin{tikzcd}
Y_t \arrow[r,"\text{quotient by }\mu_4"]  &[50pt] Y_t/\mu_4 \arrow[r,"\text{blowing-up along}","\{W=Z_2=0\}\sqcup\{Z_0=Z_1=0\}"'] &[80pt] \X_t
\end{tikzcd}
\end{equation}

\begin{rem}
If $t = -1$, the curve $C_t$ is isomorphic to the Fermat quartic $F$ by the coordinate transformation $(v_1,v_2)\mapsto (u_1,u_2)= \left(v_1/(v_1-1),v_2/(v_1-1)\right)$.
In this case, the rational map $\varphi$ coincides with the rational map from $F\times F$ to the Fermat quartic surface defined in \cite{SK79} p. 98.
\end{rem}

\subsection{The Picard-Fuchs differential operator}
Let $\omega$ be the relative 2-form on $\X_T\rightarrow T$ defined by 
\begin{equation}
\omega = \dfrac{dz_1\wedge dz_2}{w^3}.
\end{equation}
We denote the restriction of $\omega$ to $\X_t$ by $\omega_t$.
In this section, we will find the differential operator $\Dc_t$ which annihilates the period functions with respect to $\omega$, namely, the Picard-Fuchs differential operator with respect to $\omega$.

Let $\theta_t$ be the 1-form on $C_t$ defined by $dv_1/v_2^2$ and $\mu$ be the 1-form on $F$ defined by $du_1/u_2^3$.
By the explicit description of $\varphi$, we have
\begin{equation}\label{formrelation}
b^*(pr_1^*(\theta_t)\wedge pr_2^*(\mu)) = \widetilde{\varphi}^*(\omega_t),
\end{equation}
where $pr_i$ denotes the $i$-th projection.
We will show that $\theta_t$ is the pull-back of a 1-form on an elliptic curve.
For $\lambda = -2t$, let $E_\lambda$ be the elliptic curve defined by 
\begin{equation*}
E_\lambda : y^2 = x(x-1)(x-\lambda).
\end{equation*}
Then we have the quadruple covering $\psi: C_t\rightarrow E_\lambda$ defined by 
\begin{equation}\label{psidef}
(v_1,v_2)\mapsto (x,y) = \left(\frac{2v_1^2}{2v_1+t},\frac{2v_2^2v_1\left(v_1+t\right)}{\left(2v_1+t\right)^2}\right).
\end{equation}
Let $\eta_\lambda$ be the 1-form on $E_\lambda$ defined by $dx/(2y)$.
Then we have
\begin{equation}\label{psipullback}
\psi^*(\eta_{\lambda}) = \theta_t.
\end{equation}

\begin{prop}\label{PFop}
Let $\Dc_t:\O_T^\an \rightarrow \O_T^\an$ be the differential operator defined by 
\begin{equation*}
\Dc_t = -\frac{1}{2}t(1+2t)\frac{d^2}{dt^2}-\frac{1}{2}(1+4t)\frac{d}{dt}-\frac{1}{4}.
\end{equation*}
Then every period function $p(t)$ with respect to $\omega$ satisfies $\Dc_t(p)=0$.
\end{prop}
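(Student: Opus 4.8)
The plan is to deduce the statement from the classical Picard--Fuchs equation of the Legendre family of elliptic curves, transported along the correspondence $\varphi$ and the quadruple covering $\psi:C_t\to E_\lambda$ of $(\ref{psidef})$. The feature to exploit is that the Fermat quartic $F$ and the $1$-form $\mu=du_1/u_2^3$ on it do \emph{not} vary with $t$; this will force every period function of $\omega$ to be a $\C$-linear combination, with $t$-independent coefficients, of period functions of $\theta_t=dv_1/v_2^2$ on $C_t$, which by $(\ref{psipullback})$ are periods of $\eta_\lambda$ on $E_\lambda:y^2=x(x-1)(x-\lambda)$ with $\lambda=-2t$; since those satisfy the Gauss hypergeometric equation of type $(1/2,1/2,1)$, a change of variables yields $\Dc_t$.

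In detail, let $p(t)=\int_{\Gamma_t}\omega_t$ be a period function on a simply connected open $U\subset T$, where $\{\Gamma_t\}_{t\in U}$ is a $C^\infty$-family of $2$-cycles on $\X_t$. Since $\widetilde{\varphi}:Y_t\to\X_t$ is surjective, proper and generically $4:1$ between smooth projective surfaces, $\widetilde{\varphi}_{*}\widetilde{\varphi}^{*}=4\cdot\id$ on $H_2$; combining the projection formulas for $\widetilde{\varphi}$ and for the blow-up $b:Y_t\to C_t\times F$ with the form relation $(\ref{formrelation})$ gives the chain of identities
\begin{equation*}
4\,p(t)=\int_{\widetilde{\varphi}_{*}\widetilde{\varphi}^{*}\Gamma_t}\omega_t=\int_{\widetilde{\varphi}^{*}\Gamma_t}\widetilde{\varphi}^{*}\omega_t=\int_{\widetilde{\varphi}^{*}\Gamma_t}b^{*}\!\left(pr_1^{*}\theta_t\wedge pr_2^{*}\mu\right)=\int_{b_{*}\widetilde{\varphi}^{*}\Gamma_t}pr_1^{*}\theta_t\wedge pr_2^{*}\mu,
\end{equation*}
where $b_{*}\widetilde{\varphi}^{*}\Gamma_t$ is a genuine $2$-cycle on $C_t\times F$. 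As $pr_1^{*}\theta_t\wedge pr_2^{*}\mu$ is a product of holomorphic $1$-forms, it annihilates the $H_0(C_t)\otimes H_2(F)$ and $H_2(C_t)\otimes H_0(F)$ summands of the Künneth decomposition of $H_2(C_t\times F,\Z)$; fixing a basis $\{\delta_j\}$ of the \emph{constant} local system $H_1(F,\Z)$ and writing the $H_1\otimes H_1$ component of $b_{*}\widetilde{\varphi}^{*}\Gamma_t$ as $\sum_j\gamma_j(t)\otimes\delta_j$ with $\gamma_j(t)\in H_1(C_t,\Z)$ varying flatly in $t$, we get
\begin{equation*}
4\,p(t)=\sum_j\left(\int_{\gamma_j(t)}\theta_t\right)\!\left(\int_{\delta_j}\mu\right).
\end{equation*}
The coefficients $\int_{\delta_j}\mu$ are constants, and $\int_{\gamma_j(t)}\theta_t=\int_{\psi_{*}\gamma_j(t)}\eta_\lambda$ is a period of $E_\lambda$; hence $p(t)$ lies in the $\C$-span of the period functions of the Legendre family, i.e.\ in the $2$-dimensional solution space of $\lambda(1-\lambda)\frac{d^2}{d\lambda^2}+(1-2\lambda)\frac{d}{d\lambda}-\frac14$.

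It remains to pass to the variable $t$. With $\lambda=-2t$ one has $\frac{d}{d\lambda}=-\frac12\frac{d}{dt}$, $\frac{d^2}{d\lambda^2}=\frac14\frac{d^2}{dt^2}$, $1-\lambda=1+2t$ and $1-2\lambda=1+4t$, so the Legendre operator transforms into $-\frac12 t(1+2t)\frac{d^2}{dt^2}-\frac12(1+4t)\frac{d}{dt}-\frac14=\Dc_t$. Since $\Dc_t$ is $\C$-linear it annihilates every $\C$-linear combination of periods of the Legendre family, in particular $p(t)$; as $U$ was arbitrary, $\Dc_t(p)=0$ for every period function $p$ of $\omega$. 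The step I expect to be the main obstacle is the middle paragraph --- making the pullback--pushforward computation along $\widetilde{\varphi}$ and $b$ and the Künneth reduction fully rigorous, in particular checking that $b_{*}\widetilde{\varphi}^{*}\Gamma_t$ is honestly a $2$-cycle on $C_t\times F$ and that the $\gamma_j(t)$ form flat families, signs included --- the change of variables and the appeal to the classical hypergeometric equation being routine. Alternatively, one may argue at the level of variations of Hodge structure: via $\varphi$, the sub-variation generated by $\omega$ inside the degree-$2$ cohomology of the fibres of $\X_T\to T$ is a subquotient of the tensor product of the weight-one variation of $\{C_t\}_{t\in T}$ with the constant Hodge structure $H^1(F)$, while $\psi$ identifies the part of $H^1(C_t)$ containing $\theta_t$ with the pullback of $H^1(E_\lambda)$; hence all period functions of $\omega$ lie in $\ker\Dc_t$.
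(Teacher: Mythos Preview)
Your argument is correct and follows essentially the same route as the paper: reduce periods of $\omega$ via the form relation $(\ref{formrelation})$ and the K\"unneth decomposition to $\C$-linear combinations of periods of $\theta_t$, then via $(\ref{psipullback})$ to periods of the Legendre family, and finally change variables $\lambda=-2t$. The only difference is presentational: where you manipulate cycles directly and worry about the rigour of $b_*\widetilde{\varphi}^{*}\Gamma_t$, the paper packages the same computation as the morphism of Hodge structures $\phi=\widetilde{\varphi}_{!}\circ b^{*}:H^2(C_t\times F)\to H^2(\X_t)$ together with its dual $\phi^\vee$ on homology, which makes the pullback--pushforward identity $4[\omega_t]=\phi([pr_1^*\theta_t\wedge pr_2^*\mu])$ and the K\"unneth reduction immediate.
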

\begin{proof}
We have the following morphism of $\Z$-Hodge structures.
\begin{equation*}
\begin{tikzcd}
\phi: H^2(C_t\times F)\arrow[r,"b^*"] &H^2(Y_t) \arrow[r,"\widetilde{\varphi}_!"] &H^2(\X_t)
\end{tikzcd}
\end{equation*}
By (\ref{gen41}), $\widetilde{\varphi}$ is generically $4:1$, so $\widetilde{\varphi}_{!}\circ \widetilde{\varphi}^*$ coincides with multiplication by $4$ (cf. \cite{Voi02}, Remark 7.29).
Hence we have $4[\omega_t] = \phi([pr_1^*(\theta_t)\wedge pr_2^*(\mu)])$ by the relation (\ref{formrelation}).
On the other hand, we have the following map between singular homologies.
\begin{equation}\label{singhom}
\begin{tikzcd}
H_2(\X_t,\Z) \arrow[r,"\phi^\vee"] & H_2(C_t\times F,\Z) \arrow[r,twoheadrightarrow] & H_1(C_t,\Z)\otimes_{\Z} H_1(F,\Z),
\end{tikzcd}
\end{equation}
where the first map is the dual of $\phi$ and the second map is the projection induced by the K\"unneth formula.
For a 2-cycle $\Gamma$ on $\X_t$, we denote the image of $[\Gamma]\in H_2(\X_t,\Z)$ under the map (\ref{singhom}) by $\sum_{i,j}c_{i,j}[\gamma_i]\otimes [\delta_j]$, where $\gamma_i$ and $\delta_j$ are 1-cycles on $C_t$ and $F$, respectively.
Then we have 
\begin{equation*}
\int_{\Gamma}\omega_t = \langle[\Gamma],[\omega_t] \rangle = \frac{1}{4}\langle[\Gamma],\phi([pr_1^*(\theta_t)\wedge pr_2^*(\mu)])\rangle = \dfrac{1}{4}\sum_{i,j}c_{i,j}\left(\int_{\gamma_i}\theta_t\right)\left(\int_{\delta_j}\mu\right).
\end{equation*}
Since the integration of $\mu$ is constant with respect to $t$, each period function with respect to $\omega$ can be expressed by a $\C$-linear combination of the period functions with respect to $\theta_t$.
Furthermore, by the relation (\ref{psipullback}), the period functions with respect to $\theta_t$ coincide with the period functions with respect to $\eta_\lambda$.
It is well-known (cf. \cite{CMP} p. 11) that each period function with respect to $\eta_\lambda$ is a $\Z$-linear combination of the following multivalued functions
\begin{equation*}
P_1(\lambda) = \int_0^1\frac{dx}{\sqrt{x(x-1)(x-\lambda)}}\quad\text{and}\quad P_2(\lambda) = \int_1^\infty\frac{dx}{\sqrt{x(x-1)(x-\lambda)}}.
\end{equation*}
Thus, any period function with respect to $\omega$ can be expressed by a $\C$-linear combination of $P_1(\lambda)$ and $P_2(\lambda)$.
These functions satisfy the following Gauss hypergeometric differential equation of type $(1/2,1/2,1)$ (cf. \cite{CMP}, p. 15).
\begin{equation*}
\lambda(1-\lambda)\dfrac{d^2P}{d\lambda^2}+(1-2\lambda)\dfrac{dP}{d\lambda}-\frac{1}{4}P=0
\end{equation*}
If we rewrite this equation with respect to $t=-\frac{1}{2}\lambda$, we see that the differential operator $\Dc_t$ annihilates every period function with respect to $\omega$.
\end{proof}

Since $\Dc_t$ annihilates all sections of $\Pc_\omega$, the differential operator $\Dc_t$ factors the sheaf $\Qc_\omega$.
Thus we have the following morphism of sheaves of abelian groups.
\begin{equation}\label{DCQ}
\Dc_t: \Qc_\omega \lra \O_T^\an
\end{equation}

\subsection{The transcendental regulator map}
In this section, we fix a point $t \in T$ such that $\sqrt{t}\in \R_{>0}$ and compute the image of $\xi_{t}$ under the transcendental regulator map.
Then we will get an integral representation of a multivalued holomorphic function representing $\nu_{\tr}(\xi)(\omega) \in \Gamma(T,\Qc_\omega)$.

Let $R_0$ (resp. $R_1$) be the point in $l_0\cap C_{t}$ (resp. $l_1\cap C_{t}$) such that its $z_1$-coordinate is positive.
Let $\Gamma \subset \R^2$ be the closed bounded subset bounded by the real curve $C_{t}(\R)$ and the line segments $\overline{PR_0},\overline{PR_1}$ (see Figure \ref{gfigure}).
Since we have the natural orientation on $\Gamma$ induced by $\R^2$, we regard $\Gamma$ as a 2-chain on $\P^2$.
We define a subset $\Gamma^\circ$ of $\Gamma$ by
\begin{equation*}
\Gamma^\circ = \left\{(z_1,z_2)\in \Gamma: z_2^4-f_t(z_1)>0\right\}.
\end{equation*}

\begin{figure}[h]
\centering
\includegraphics[width=12cm]{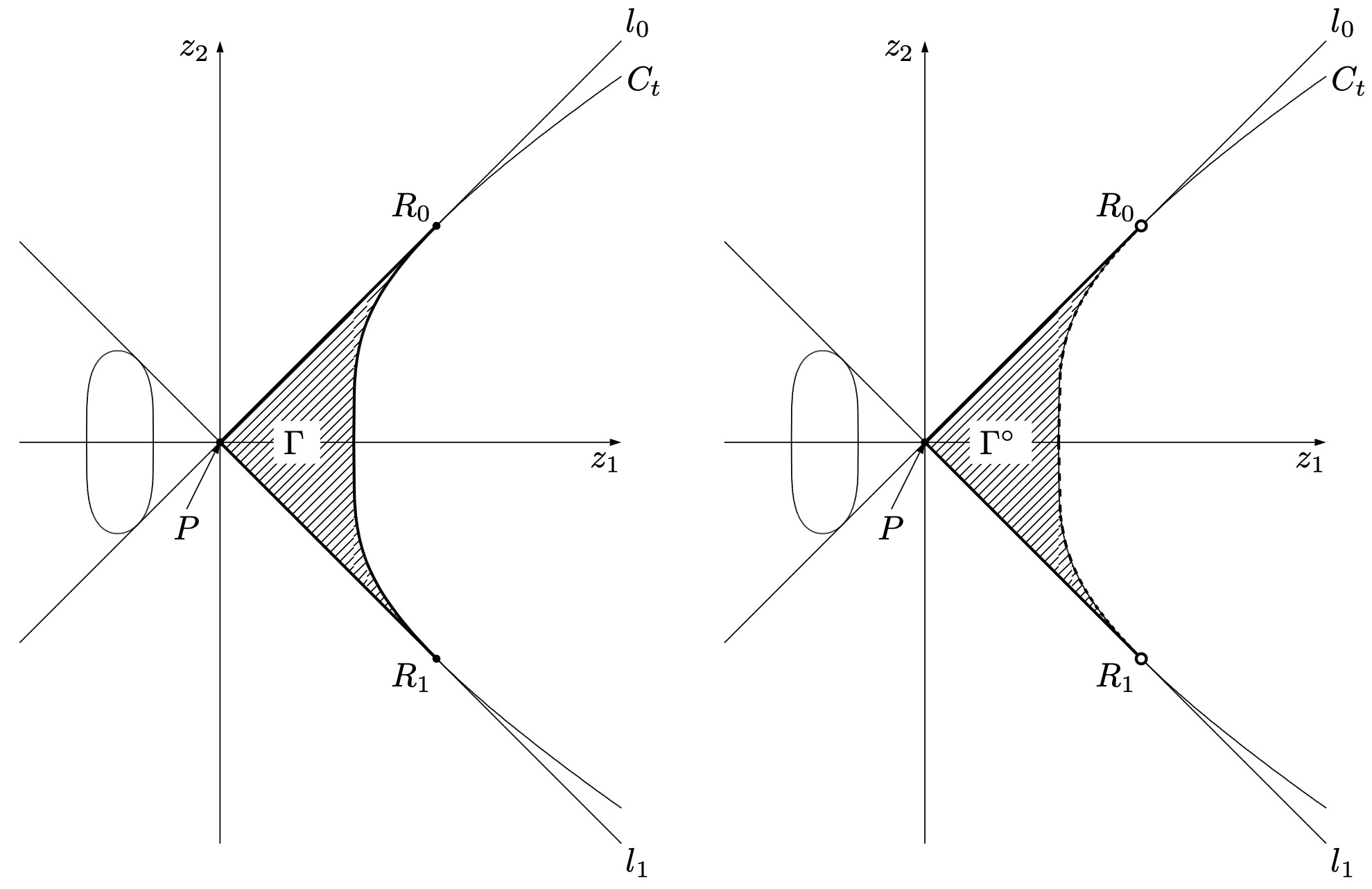}
\caption{$\Gamma$ and $\Gamma^\circ$ in $\P^2$}
\label{gfigure}
\end{figure}

\begin{prop}\label{2chainprop}
The image of $\xi_{t}$ under the transcendental regulator map is given as follows:
\begin{equation*}
\Phi(\xi_t)([\omega_t]) \equiv \int_{\Gamma^\circ}\frac{dz_1\wedge dz_2}{\left(z_2^4-f_{t}(z_1)\right)^{3/4}}  \mod \Pc(\omega_{t}),
\end{equation*}
where $\left(z_2^4-f_{t}(z_1)\right)^{3/4}$ denotes the cube of the positive 4th root of $z_2^4-f_{t}(z_1)\in \R_{>0}$.
\end{prop}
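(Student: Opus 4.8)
By~(\ref{transregval}), $\Phi(\xi_t)([\omega_t]) \equiv \int_{\Gamma}\omega_t \mod \Pc(\omega_t)$ for \emph{any} topological $2$-chain $\Gamma$ on $\X_t$ with $\partial\Gamma = \gamma$, where $\gamma = \gamma_0 + \gamma_1$ is the $1$-cycle attached to $\xi_t$ in Section~2.2. The plan is to exhibit such a $\Gamma$ by lifting the planar region of the statement and to read off its integral. First I would make $\gamma$ explicit: since $\div_{l_{0,0}}(f_{0,0}) = Q_2 - Q_0$, the morphism $f_{0,0}\colon l_{0,0}\to\P^1$ has degree $1$, so $\gamma_0$ — the pull-back of $[\infty,0]$ — is a single arc on $l_{0,0}$ from $Q_0$ (the pole of $f_{0,0}$) to $Q_2$ (its zero); likewise $\gamma_1$ is an arc on $l_{1,0}$ from $Q_2$ to $Q_0$. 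Because $l_{0,0}$ and $l_{1,0}$ are rational, hence simply connected, each $\gamma_i$ is pinned down only up to the boundary of a $2$-chain lying in $l_{i,0}$, and such $2$-chains contribute $0$ to $\int\omega_t$ because a holomorphic $2$-form restricts to zero on any curve. Thus I am free to replace $\gamma_0,\gamma_1$ by any arcs with the same endpoints, and more generally to modify any candidate $2$-chain by $2$-chains supported on curves without changing the integral modulo $\Pc(\omega_t)$.

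Next I would produce the $2$-chain. Over the region $\Gamma^\circ$ where $z_2^4 - f_t(z_1) > 0$, the positive real fourth root $w = \bigl(z_2^4 - f_t(z_1)\bigr)^{1/4}$ is a continuous section $s$ of $\pi$, and the $2$-chain $s(\Gamma^\circ)$, oriented by the orientation of $\Gamma^\circ\subset\R^2$, satisfies $\int_{s(\Gamma^\circ)}\omega_t = \int_{\Gamma^\circ}\dfrac{dz_1\wedge dz_2}{(z_2^4 - f_t(z_1))^{3/4}}$. The heart of the argument is the computation of $\partial s(\Gamma^\circ)$, using explicit facts about the bitangents. On each of $l_0\colon z_2 = z_1$ and $l_1\colon z_2 = -z_1$ one has the perfect-square identity $z_2^4 - f_t(z_1) = (z_1^2 - 2z_1 - t)^2$, and $z_1^2 - 2z_1 - t \le 0$ along the segments $\overline{PR_0}$, $\overline{PR_1}$, with equality only at the tangency points $R_0$, $R_1$. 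Hence the lift of $\overline{PR_i}$ under $s$ is an arc contained in the component $l_{i,0}$ — the one cut out by $w^2 = -(z_1^2 - 2z_1 - t)$, which is exactly the component containing $Q_0$ and $Q_2$ — running from $Q_0$ (the lift of $P$, since at $P$ one has $w^4 = t^2$, so $w = \sqrt t$) to the point $\tilde R_i\in l_{i,0}\cap\pi^{-1}(C_t)$ over $R_i$; the remaining part of $\partial\Gamma^\circ$, lying along $C_t(\R)$, lifts into the ramification curve $\pi^{-1}(C_t) = \{w = 0\}$.

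It then remains to adjust $s(\Gamma^\circ)$ to a $2$-chain with boundary exactly $\gamma$ without disturbing $\int\omega_t$ modulo $\Pc(\omega_t)$. The arcs of $\partial s(\Gamma^\circ)$ on $l_{0,0}$ and $l_{1,0}$ are already halves of $\gamma_0$, $\gamma_1$ and, by the freedom above, can be completed within these rational curves; the genuine issue is the arc lying on $\pi^{-1}(C_t)$. For this I would use the involution $\sigma^2$, which fixes $\pi^{-1}(C_t) = \{W = 0\}$ pointwise and satisfies $(\sigma^2)^*\omega_t = -\omega_t$: a suitable $2$-chain built from $s(\Gamma^\circ)$, its $\sigma^2$-translate, and $2$-chains on $l_{0,0}\cup l_{1,0}$ cancels the $\sigma^2$-invariant ramification-curve arcs, while the two contributions to $\int\omega_t$ combine, and the resulting boundary is $\gamma$; tracking orientations then fixes the normalization and yields the asserted identity.

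I expect the main obstacle to be exactly this orientation and normalization bookkeeping near the ramification curve: one must verify that, after the $\sigma^2$-correction and the completion of the segment-lifts, the boundary of the corrected chain equals $\gamma$ on the nose — not a nonzero multiple of it, nor merely a homologous cycle — and that every auxiliary $2$-chain introduced is genuinely supported on the curve configuration $l_{0,0}\cup l_{1,0}\cup\pi^{-1}(C_t)$, so that it drops out of $\int\omega_t$. Concretely, this is a matter of checking how the four points of $\pi^{-1}(P)$, the two real sheets $w = \pm(z_2^4 - f_t(z_1))^{1/4}$ over $\Gamma^\circ$, and the arcs of $C_t(\R)$ bounding $\Gamma$ fit together, which is the combinatorial content encoded in Figure~\ref{gfigure}.
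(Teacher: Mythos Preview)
Your proposal is correct and is essentially the paper's own argument: the paper takes the two real lifts $\Gamma_\pm = \{w = \pm(z_2^4-f_t(z_1))^{1/4}\}$ of the closed region $\Gamma$ and forms $\Gamma_+ - \Gamma_-$, which is precisely your $s(\Gamma^\circ)$ minus its $\sigma^2$-translate, so that the ramification-curve arcs cancel, the residual boundary on each $l_{i,0}$ is a $1$-cycle and hence bounds a $2$-chain there, and $(\sigma^2)^*\omega_t=-\omega_t$ doubles the integral. Your explicit verification via the perfect-square identity $z_2^4-f_t(z_1)=(z_1^2-2z_1-t)^2$ on $l_i$ is exactly what underlies the paper's Figure~\ref{gpmfigure}, and the orientation bookkeeping you flag is handled there in the same way.
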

\begin{proof}
Recall that $\xi_{t}$ is represented by the formal sum $(l_{0,0},f_{0,0})+(l_{1,0},f_{1,0})$. 
For $i=0,1$, let $\gamma_{i}$ be the pull-back of the path $[\infty,0]$ (see Section 2.2) on $\P^1$ by $f_{i,0}:l_{i,0}\rightarrow \P^1$.
By the relation (\ref{ratfunction1}), $\gamma_0$ (resp. $\gamma_1$) is the path from $Q_0$ to $Q_2$ (resp. $Q_2$ to $Q_0$).
We will find a 2-chain associated with $\xi_t$, in other words, a 2-chain on $\X_{t}$ whose boundary coincides with $\gamma_0+\gamma_1$.

Let $\Gamma_+$ and $\Gamma_-$ be the closed subsets of the real surface $\X_{t}(\R)$ defined by
\begin{equation*}
\begin{aligned}
\Gamma_+ &= \left\{\left(z_1,z_2,w\right)\in \R^3: (z_1,z_2)\in \Gamma, w = \left(z_2^4-f_{t}(z_1)\right)^{1/4}\right\} \text{ and }  \\
\Gamma_- &= \left\{\left(z_1,z_2,w\right)\in \R^3: (z_1,z_2)\in \Gamma, w = -\left(z_2^4-f_{t}(z_1)\right)^{1/4}\right\}.
\end{aligned}
\end{equation*}
Then $\pi: \X_{t}\rightarrow \P^2$ induces the homeomorphisms between $\Gamma_+,\Gamma_-$ and $\Gamma$. 
By these homeomorphisms, we have natural orientations on $\Gamma_+$ and $\Gamma_-$, so we regard them as 2-chains on $\X_{t}$.
We will define the paths $\gamma_{0,\pm}, \gamma_{1,\pm}$ on $\del\Gamma_\pm$ as shown in Figure \ref{gpmfigure}.

\begin{figure}[h]
\centering
\includegraphics[width=6.8cm]{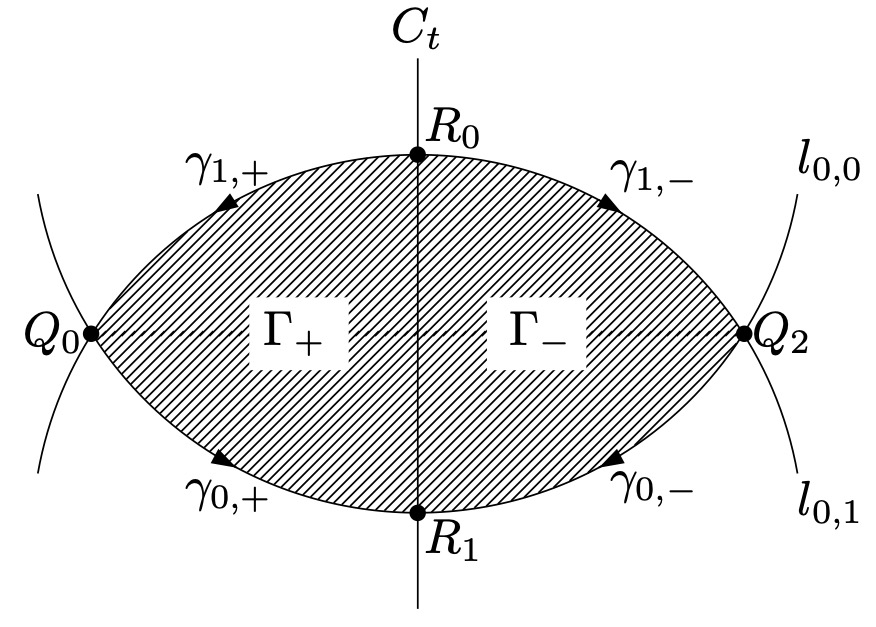}
\caption{The 2-chains $\Gamma_+$ and $\Gamma_-$ on $\X_t$}
\label{gpmfigure}
\end{figure} 

Then we have
\begin{equation}\label{boundary}
\del(\Gamma_+-\Gamma_-) = (\gamma_0+\gamma_1) + (\gamma_{0,+}-\gamma_{0,-}-\gamma_0) + (\gamma_{1,+}-\gamma_{1,-}-\gamma_{1}).
\end{equation}
For $i=0,1$, the 1-chain $\gamma_{i,+}-\gamma_{i,-}-\gamma_{i}$ on $l_{i,0}$ satisfies $\del(\gamma_{i,+}-\gamma_{i,-}-\gamma_{i})=0$, so this is a 1-cycle.
Since $l_{i,0}$ is isomorphic to $\P^1$ and $H_1(\P^1,\Z)=0$, this is also a 1-boundary.
Thus we can find a 2-chain $\Gamma_i$ on $l_{i,0}$ such that $\del\Gamma_i = \gamma_{i,+}-\gamma_{i,-}-\gamma_{i}$.
By the relation (\ref{boundary}), the 2-chain $\Gamma_+-\Gamma_--\Gamma_0-\Gamma_1$ is a 2-chain associated with $\xi$.
By the formula (\ref{transregval}), we have
\begin{equation*}
\Phi(\xi_t)([\omega_t]) \equiv \int_{\Gamma_+}\omega_{t} - \int_{\Gamma_-}\omega_{t} - \int_{\Gamma_0}\omega_{t} - \int_{\Gamma_1}\omega_{t} \mod \Pc(\omega_{t}).
\end{equation*}
Since $\Gamma_0$ and $\Gamma_1$ are 2-chains on curves, the restrictions of $\omega_t$ on them vanish.
Furthermore, since $\Gamma_- = \sigma^2(\Gamma_+)$ and $(\sigma^2)^*\omega_t = -\omega_t$, we have
\begin{equation*}
\int_{\Gamma_+}\omega_{t} - \int_{\Gamma_-}\omega_{t} - \int_{\Gamma_0}\omega_{t} - \int_{\Gamma_1}\omega_{t}  =  \int_{\Gamma_+}\omega_{t} -  \int_{\Gamma_-}\omega_{t} = 2\int_{\Gamma_+}\omega_{t}.
\end{equation*}
Since the integration of $\omega_t$ on $(\pi|_{\Gamma_+})^{-1}(\Gamma\setminus\Gamma^\circ)$ is $0$, we have
\begin{equation*}
2\int_{\Gamma_+}\omega_{t} =  2\int_{(\pi|_{\Gamma_+})^{-1}(\Gamma^\circ)}\omega_t= 2\int_{\Gamma^\circ}\frac{dz_1\wedge dz_2}{\left(z_2^4-f_{t}(z_1)\right)^{3/4}}.
\end{equation*}
Combining the above equations, we get the result.
\end{proof}

For $t\in \R_{>0}$, we define the function $G(t)$ by the improper integral
\begin{equation*}
G(t) = 2\int_{\Gamma^\circ}\frac{dz_1\wedge dz_2}{\left(z_2^4-f_{t}(z_1)\right)^{3/4}}.
\end{equation*}
Note that the integration converges by Proposition \ref{2chainprop}.
The following technical proposition is the final piece in the proof of the main theorem.
\begin{prop}\label{diffeqnG}
We can extend $G(t)$ to the multivalued holomorphic function on $T$ satisfying the differential equation
\begin{equation}\label{diffeqnGeqn}
\Dc_t(G(t))= \frac{1}{2\sqrt{t}(t+1)},
\end{equation}
where $\Dc_t$ is the Picard-Fuchs differential operator in Proposition \ref{PFop}.
\end{prop}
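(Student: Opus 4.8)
The plan is to treat $G(t)$ as a period-type integral depending on the parameter $t$ and to differentiate under the integral sign. First I would make the region of integration independent of $t$: after possibly rescaling the coordinates (using the explicit form $f_t(z_1) = (2z_1+t)(2z_1^2-2z_1-t)$ and the bitangent lines $z_2 = \pm z_1$), I would exhibit $\Gamma^\circ$, or a suitable deformation of it, as a fixed real $2$-cell, so that the $t$-dependence sits entirely in the integrand $(z_2^4-f_t(z_1))^{-3/4}$. The differential operator $\Dc_t = -\tfrac12 t(1+2t)\tfrac{d^2}{dt^2} - \tfrac12(1+4t)\tfrac{d}{dt} - \tfrac14$ is then applied inside the integral. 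A key preliminary point is that the improper integral converges and that the differentiations under the integral sign are justified near the boundary where $z_2^4 - f_t(z_1) \to 0$ (the integrand has an integrable singularity of order $3/4$, and each $t$-derivative worsens this only mildly, still within the integrable range, after combining with the decay coming from the vanishing locus being the ramification curve); I would dispose of this by a standard dominated-convergence argument, perhaps after a change of variables resolving the singularity along $C_t(\R)$.

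The heart of the computation is that $\Dc_t$ applied to the \emph{integrand} produces an exact $2$-form plus a multiple of an elementary term. Concretely, writing $h = z_2^4 - f_t(z_1)$, one computes $\Dc_t(h^{-3/4})$ and seeks to express it as $d\beta_t$ for some $1$-form $\beta_t$ (rational in $z_1,z_2$ and in $h^{1/4}$) up to the correction term. This is precisely the mechanism by which inhomogeneous Picard-Fuchs equations arise: $\Dc_t$ annihilates genuine periods (Proposition \ref{PFop}), so on the open cycle it must land in the image of $d$, and the failure of $\partial\Gamma^\circ$ to be a cycle on $\X_t$ contributes the boundary term $\int_{\partial\Gamma^\circ}\beta_t$. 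I would compute this boundary integral: $\partial\Gamma^\circ$ consists of arcs along $C_t(\R)$ (where $h=0$, so only the part of $\beta_t$ regular there contributes) and the line segments $\overline{PR_0}, \overline{PR_1}$ on the bitangents $z_2 = \pm z_1$. On the bitangent lines, $f_t$ restricts to a perfect square, $h$ restricts to $z_1^4 - (\text{square})^2 = (z_1^2 - q(z_1))(z_1^2+q(z_1))$ factoring further, so $h^{1/4}$ and hence $\beta_t$ become explicitly integrable one-variable functions; evaluating gives, after the arithmetic, exactly $\tfrac{1}{2\sqrt{t}(t+1)}$. The factor $\sqrt{t}$ enters through the $z_1$-coordinate of the tangency point $R_i$ and the value of $h$ near $P$, consistent with $Q = (0,0,\sqrt{t})$ lying over $P$.

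The extension of $G(t)$ to a multivalued holomorphic function on all of $T = \Spec\C[t, \tfrac{1}{t(t+1)(2t+1)}, \sqrt{t}]$ I would handle by analytic continuation: $G$ is real-analytic on $\R_{>0}$ by the above, and since it satisfies the linear ODE $\Dc_t(G) = \tfrac{1}{2\sqrt t(t+1)}$ whose coefficients and inhomogeneous term are holomorphic (multivalued via $\sqrt t$) on $T$ with regular singularities only at $t \in \{0, -1/2, -1\}$ and $\infty$, solutions extend to multivalued holomorphic functions on $T$ by the standard theory of ODEs with regular singular points. Alternatively, one continues the integral $\int_{\Gamma^\circ}$ directly by deforming the $2$-chain $\Gamma^\circ$ as $t$ moves in $\C$, which is legitimate away from the loci where $C_t$ degenerates or the bitangents become inflectional, i.e. exactly the complement defining $T$.

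The main obstacle I anticipate is the explicit identification of the primitive $\beta_t$ with $\Dc_t(h^{-3/4}) = d\beta_t + (\text{correction})$: this requires guessing the correct ansatz for $\beta_t$ (a rational combination of $z_1,z_2,h^{1/4}$ of the right weight) and verifying it by a direct but lengthy differentiation, and then carrying out the boundary evaluation along the two bitangents carefully enough to land on the precise constant $\tfrac{1}{2\sqrt t(t+1)}$ rather than some scalar multiple. The convergence and differentiation-under-the-integral justifications are routine by comparison, and the analytic continuation is formal once the ODE is in hand.
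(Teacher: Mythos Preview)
Your overall strategy---differentiate under the integral, express $\Dc_t$ of the integrand as an exact form, apply Stokes, and read off the inhomogeneous term as a boundary contribution---is exactly the paper's. What you are missing is the concrete change of variables that makes this program executable. The paper never works with $h^{-3/4}$ in the $(z_1,z_2)$ coordinates. Instead it uses the rational map
\[
\tau:(z_1,z_2)\longmapsto (x,p)=\left(\frac{2z_1^2}{2z_1+t},\ \frac{z_2^4}{z_2^4-f_t(z_1)}\right),
\]
coming from the product structure $C_t\times F\dashrightarrow \X_t$ and the cover $\psi:C_t\to E_\lambda$ set up in Sections~5.2--5.3. Under $\tau$ (after halving by the symmetry $z_2\mapsto -z_2$) the domain becomes two explicit regions $\triangle_1,\triangle_2\subset\R^2$ that are \emph{independent of $t$}, and the integrand separates as
\[
\frac{dx\,dp}{8\,x^{1/2}(1-x)^{1/2}(x+2t)^{1/2}\,p^{3/4}(1-p)^{1/2}}
\]
(similarly on $\triangle_2$ with $q=1/p$). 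All the $t$-dependence now sits in the elementary factor $(x+2t)^{-1/2}$, so dominated convergence is trivial, and the primitive you call $\beta_t$ is supplied by the one-variable identity
\[
\Dc_t\!\left(\frac{1}{x^{1/2}(x-1)^{1/2}(x+2t)^{1/2}}\right)=\frac{\partial}{\partial x}\!\left(\frac{x^{1/2}(x-1)^{1/2}}{2(x+2t)^{3/2}}\right),
\]
which is immediate to check. Stokes on truncated regions $\triangle_{i,\varepsilon}$ then collapses $\Dc_t(G)$ to a single $x$-integral over $[0,2]$, evaluated by the substitution $u=\sqrt{(2-x)/(x+2t)}$ to $\tfrac{1}{2\sqrt t\,(t+1)}$.

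By contrast, your plan to guess $\beta_t$ directly from $\Dc_t(h^{-3/4})$ in the original coordinates is not just laborious but has a genuine analytic gap: a $t$-derivative of $h^{-3/4}$ produces $h^{-7/4}\cdot\partial_t f_t(z_1)$, and since $\partial_t f_t$ does not vanish along $C_t$, this is \emph{not} integrable across $C_t(\R)$, so your ``each $t$-derivative worsens this only mildly'' is false as stated. The paper's change of variables dissolves both obstacles---finding the primitive and justifying the differentiation---simultaneously, and it is the substantive idea of the proof rather than a cosmetic rescaling. Your analytic-continuation paragraph is fine and matches the paper's one-sentence reduction: once the ODE holds for $\sqrt t\in\R_{>0}$, extension to a multivalued holomorphic function on $T$ is automatic.
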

Using this proposition, we can show the following.

\begin{thm}\label{tnonzero}
For a very general $t\in T$, $\Phi(\xi_t)$ is non-torsion.
\end{thm}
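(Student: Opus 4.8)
The plan is to derive the statement from the explicit computations of Propositions~\ref{2chainprop} and~\ref{diffeqnG} by applying the Picard--Fuchs operator $\Dc_t$ to the normal function attached to $\xi$ and then invoking the genericity Lemma~\ref{importantlemma}.

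Recall that the algebraic family $\xi=\{\xi_t\}_{t\in T}$ on $\X_T\to T$ has an associated section $\nu_{\tr}(\xi)(\omega)\in\Gamma(T,\Qc_\omega)$ with $\ev_t(\nu_{\tr}(\xi)(\omega))=\Phi(\xi_t)([\omega_t])$ in $\C/\Pc(\omega_t)$ for every $t$, and that since $\dim H^{2,0}(\X_t)=1$ the class $\Phi(\xi_t)$ is non-torsion exactly when $\Phi(\xi_t)([\omega_t])$ is non-torsion in $\C/\Pc(\omega_t)$. I would first compute the honest holomorphic function $\Dc_t\bigl(\nu_{\tr}(\xi)(\omega)\bigr)\in\Gamma(T,\O_T^\an)$ coming from~(\ref{DCQ}). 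By Proposition~\ref{diffeqnG}, $G$ extends to a multivalued holomorphic function on $T$; and by Proposition~\ref{2chainprop} together with the holomorphic variation of the transcendental regulator, near any point $t_0$ of the real arc $\{t\in T:\sqrt t\in\R_{>0}\}$ a holomorphic local representative of $\nu_{\tr}(\xi)(\omega)$ agrees, modulo $\Pc_\omega$, with a local branch of $\tfrac12\,G$. Applying $\Dc_t$ --- which annihilates $\Pc_\omega$ --- and using the inhomogeneous Picard--Fuchs identity of Proposition~\ref{diffeqnG}, this gives $\Dc_t\bigl(\nu_{\tr}(\xi)(\omega)\bigr)=\tfrac12\,\Dc_t(G)=\dfrac{1}{4\sqrt t\,(t+1)}$ near $t_0$; since both sides are globally defined holomorphic functions on the connected curve $T$ that agree on an arc, the identity holds on all of $T$.

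On $T$ the elements $t$ and $t+1$ are invertible, so the right-hand side is nowhere vanishing; in particular it is a nonzero section of $\O_T^\an$, and hence so is $\Dc_t\bigl(m\cdot\nu_{\tr}(\xi)(\omega)\bigr)=m/(4\sqrt t\,(t+1))$ for every integer $m\ge1$. Since $\Dc_t$ annihilates the zero section of $\Qc_\omega$, this forces $m\cdot\nu_{\tr}(\xi)(\omega)\neq0$ in $\Gamma(T,\Qc_\omega)$ for all $m\ge1$. Fixing $m$ and applying Lemma~\ref{importantlemma} to this nonzero section, there is a countable union $Z_m$ of proper analytic subsets of $T$ with $\ev_t\bigl(m\cdot\nu_{\tr}(\xi)(\omega)\bigr)=m\,\Phi(\xi_t)([\omega_t])\neq0$ for $t\notin Z_m$. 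Then for $t$ outside the countable union $\bigcup_{m\ge1}Z_m$, no positive multiple of $\Phi(\xi_t)([\omega_t])$ vanishes, which is precisely the assertion that $\Phi(\xi_t)$ is non-torsion for a very general $t\in T$. (Combined with Propositions~\ref{reduce1},~\ref{sectionlem} and~\ref{nonzeroprop}, this completes the proof of the main theorem, Theorem~\ref{mainthm}.)

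The only genuinely substantial ingredient is Proposition~\ref{diffeqnG} --- the convergence and holomorphic continuation of the period-type integral $G$ and the derivation of the inhomogeneous Picard--Fuchs equation it satisfies --- which I take as given here. Granting it, the rest is the routine mechanism ``a nonzero image under the Picard--Fuchs operator forces a non-torsion specialization of the normal function''; the only point requiring care is the compatibility of $\Dc_t$ with passage to the multivalued primitive $G$, which is handled by the already-noted fact that $\Dc_t$ kills every period, so that $\Dc_t(G)$ is the single-valued function of Proposition~\ref{diffeqnG}.
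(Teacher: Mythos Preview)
Your argument is correct and follows essentially the same route as the paper: identify $\nu_{\tr}(\xi|_T)(\omega)$ with (a constant multiple of) $G$ via Proposition~\ref{2chainprop}, apply $\Dc_t$ using Proposition~\ref{diffeqnG} to obtain a nonzero holomorphic function, and conclude non-torsion via Lemma~\ref{importantlemma}. The only discrepancy is your factor $\tfrac12$, which arises because the \emph{statement} of Proposition~\ref{2chainprop} omits the factor $2$ that its proof actually produces (so $\Phi(\xi_t)([\omega_t])\equiv G(t)$, not $\tfrac12 G(t)$); this constant is irrelevant to the non-torsion conclusion.
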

\begin{proof}
By Proposition \ref{2chainprop}, we see that 
\begin{equation}\label{coincideeqn}
\ev_{t}(\nu_\tr(\xi|_T)(\omega)) = \Phi(\xi_t)([\omega_t]) \equiv G(t) \mod \Pc(\omega_t).
\end{equation}
holds for $t\in T$ such that $\sqrt{t}\in \R_{>0}$.
By Lemma \ref{importantlemma}, $\nu_{\tr}(\xi|_T)(\omega)$ is represented by the multivalued holomorphic function $G(t)$.
Then we have $\Dc_t(\nu_{\tr}(\xi|_T)(\omega)) = \Dc_t(G(t))$. 
Note that we regard $\Dc_t$ on the left-hand side as the map $\Gamma(T,\Qc_\omega)\rightarrow \Gamma(T,\O_T^\an)$ induced by the morphism (\ref{DCQ}).
By Proposition \ref{diffeqnG}, $\Dc_t(G(t))\neq 0$, so $\nu_{\tr}(\xi|_T)(\omega)$ is non-torsion.
Then by Lemma \ref{importantlemma}, we have the result.
\end{proof}

Finally, we can prove the main theorem.
\begin{proof}[Proof of Theorem \ref{mainthm}]
By Theorem \ref{tnonzero}, there exists a point $t_0\in T$ such that $\Phi(\xi_{t_0})$ is non-torsion.
Then $\nu_{\sqrt{-1}}(\xi_t)$ is non-torsion for such a $t$ by Proposition \ref{reduce1}.
By Proposition \ref{sectionlem}, $\nu_{\sqrt{-1}}(\xi)$ is non-torsion, thus by Proposition \ref{nonzeroprop}, we have the result.
\end{proof}

\subsection{Proof of Proposition \ref{diffeqnG}}
Finally, we will prove Proposition \ref{diffeqnG} and complete the proof of the main theorem.
If the differential equation (\ref{diffeqnGeqn}) holds for $t\in T$ such that $\sqrt{t}\in \R_{>0}$, it is clear that we can extend $G(t)$ holomorphically on $T$, so we will prove (\ref{diffeqnGeqn}) for $\sqrt{t}\in \R_{>0}$.

\begin{figure}[h]
\centering
\includegraphics[width=7.2cm]{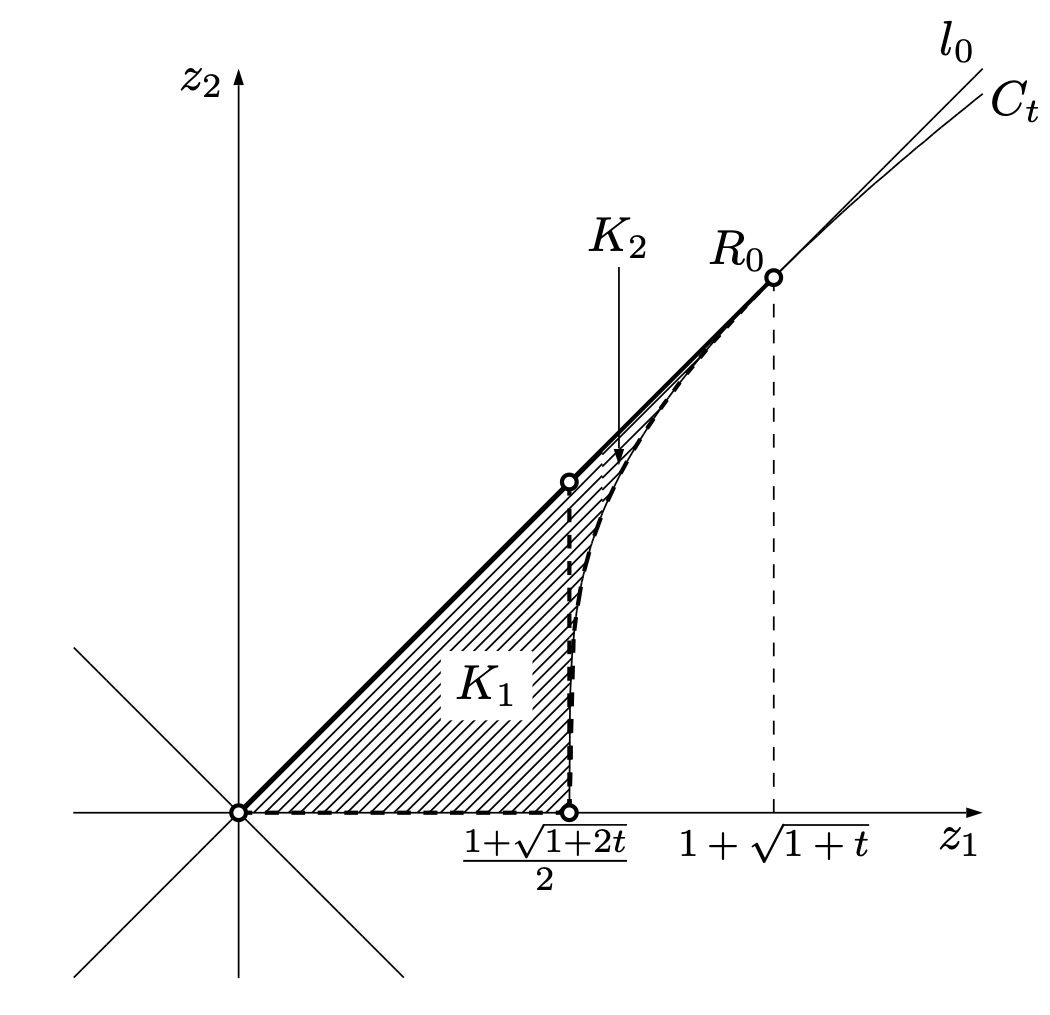}
\caption{$K_1$ and $K_2$}
\label{kfigure}
\end{figure} 

Let $K_1$ and $K_2$ be subsets of $\Gamma^\circ$ defined by 
\begin{equation*}
\begin{aligned}
K_1 & = \{(z_1,z_2)\in \Gamma^\circ : z_2>0, z_1 < \alpha\} \text{ and } \\
K_2 & = \{(z_1,z_2)\in \Gamma^\circ : z_2>0, z_1 > \alpha\},
\end{aligned}
\end{equation*}
where $\alpha=(1+\sqrt{1+2t})/2$ is the positive root of $f_t(z)=0$ (Figure \ref{kfigure}).
Since the integrand of $G(t)$ is invariant under $z_2\mapsto -z_2$, we have
\begin{equation}\label{GtoK}
G(t) = 4\int_{K_1\sqcup K_2}\frac{dz_1\wedge dz_2}{\left(z_2^4-f_{t}(z_1)\right)^{3/4}}.
\end{equation}

We consider the rational map $\tau$ defined by
\begin{equation*}
\begin{tikzcd}[row sep = tiny]
\tau:&[-35pt] \P^2\arrow[r,dashrightarrow] &\P^1\times \P^1 \\
& (z_1,z_2)\aru \arrow[r,mapsto] & \left(x,p\right) = \left(\dfrac{2z_1^2}{2z_1+t}, \dfrac{z_2^4}{z_2^4-f_{t}(z_1)}\right).\aru
\end{tikzcd}
\end{equation*}
The rational map $\tau$ fits into the following diagram.
\begin{equation}\label{diag}
\begin{tikzcd}
(x,y,u_1,u_2)\arrow[d,mapsto]\arrow[r,phantom,"\in"]&[-20pt]E_t \times F \arrow[d] & C_t\times F \arrow[r,dashrightarrow,"\varphi"] \arrow[l,"\psi\times \id"']& \X_t\arrow[d,"\pi"] \\
(x,p) = \left(x,\dfrac{u_1^4}{u_2^4}\right)\arrow[r,phantom,"\in"]&\P^1\times \P^1 && \P^2 \arrow[ll, "\tau",dashrightarrow]
\end{tikzcd}
\end{equation} 
where $\psi$ is the morphism defined by (\ref{psidef}).
Let $\triangle_1$ and $\triangle_2$ be the subsets of $\P^1\times \P^1$ defined by
\begin{equation*}
\begin{aligned}
\triangle_1 & = \left\{(x,p)\in \R^2 :0<  p\le \left(\frac{x}{2-x}\right)^2\text{ and } 0<x<1\right\} \text{ and } \\
\triangle_2 & = \left\{(x,q)\in \R^2 :0<q\le \left(\frac{2-x}{x}\right)^2 \text{ and } 1<x<2 \right\},
\end{aligned}
\end{equation*}
where $q = 1/p$ (Figure \ref{triangle}).
Then $\tau$ induces the orientation-preserving homeomorphisms $K_1\simeq \triangle_1$ and $K_2\simeq \triangle_2$.

\begin{figure}[h]
\centering
\includegraphics[width=11.5cm]{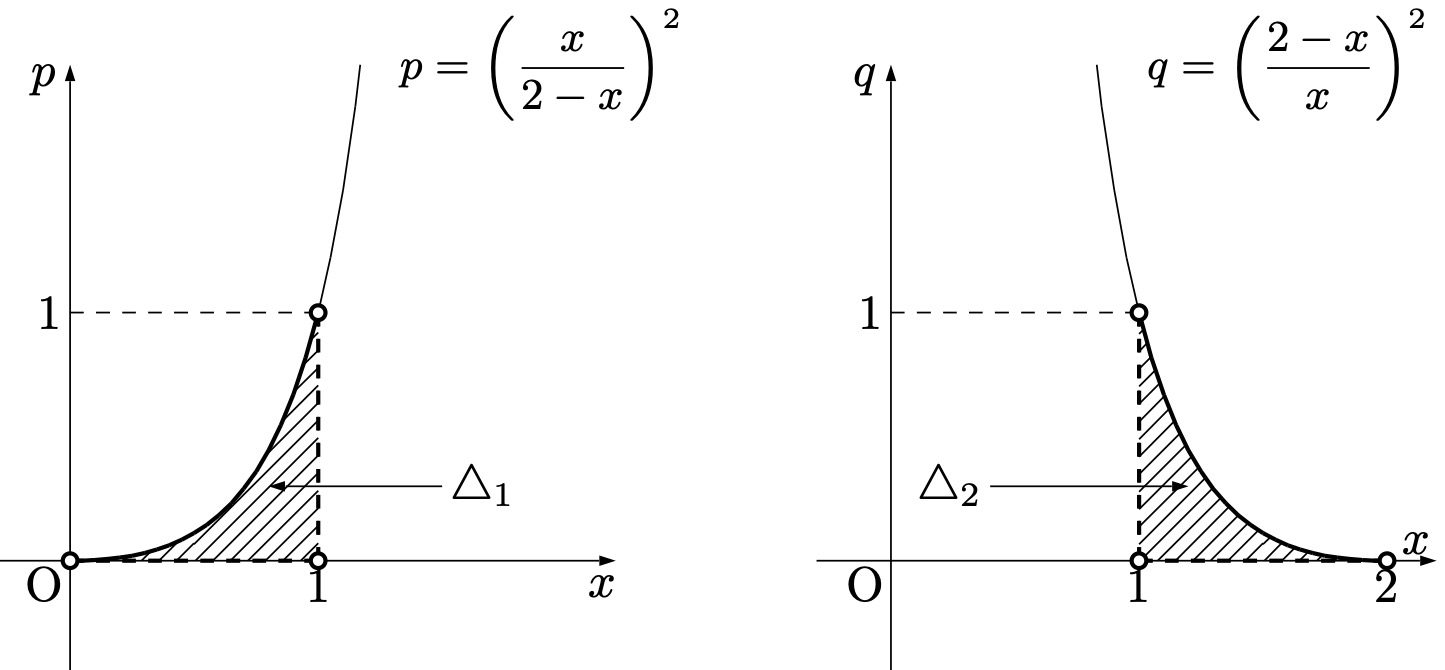}
\caption{$\triangle_1$ and $\triangle_2$}
\label{triangle}
\end{figure} 

Using the relations (\ref{formrelation})(\ref{psipullback}) and the diagram (\ref{diag}), we see that the 2-form $\dfrac{dz_1\wedge dz_2}{\left(z_2^4-f_{t}(z_1)\right)^{3/4}}$ on $K_1\sqcup K_2$ is the pull-back of the 2-form \\
$\dfrac{dx\wedge dp}{8x^{\frac{1}{2}}(1-x)^{\frac{1}{2}}(x+2t)^{\frac{1}{2}}p^{\frac{3}{4}}(1-p)^{\frac{1}{2}}}$ on $\triangle_1$ and 
$\dfrac{dx\wedge dq}{8x^{\frac{1}{2}}(x-1)^{\frac{1}{2}}(x+2t)^{\frac{1}{2}}q^{\frac{3}{4}}(1-q)^{\frac{1}{2}}}$ on $\triangle_2$.
By (\ref{GtoK}), $G(t)$ coincides with 
\begin{equation}\label{Ktotriangle}
\int_{\triangle_1}\dfrac{dxdp}{2x^{\frac{1}{2}}(1-x)^{\frac{1}{2}}(x+2t)^{\frac{1}{2}}p^{\frac{3}{4}}(1-p)^{\frac{1}{2}}} + \int_{\triangle_2}\dfrac{dxdq}{2x^{\frac{1}{2}}(x-1)^{\frac{1}{2}}(x+2t)^{\frac{1}{2}}q^{\frac{3}{4}}(1-q)^{\frac{1}{2}}}.
\end{equation}
Note that the domains of the integrations do not depend on $t$.

For $i\ge 1$, we will prove 
\begin{equation}\label{interchangerel}
\begin{aligned}
&\frac{d^i}{d t^i}\left(\int_{\triangle_1}\dfrac{dxdp}{2x^{\frac{1}{2}}(1-x)^{\frac{1}{2}}(x+2t)^{\frac{1}{2}}p^{\frac{3}{4}}(1-p)^{\frac{1}{2}}}\right) \\
&\qquad\qquad= \int_{\triangle_1}\frac{\del^i}{\del t^i}\left(\dfrac{1}{2x^{\frac{1}{2}}(1-x)^{\frac{1}{2}}(x+2t)^{\frac{1}{2}}p^{\frac{3}{4}}(1-p)^{\frac{1}{2}}}\right)dxdp.
\end{aligned}
\end{equation}
For any $t_0>0$, there exists a constant $M$ such that, for any $t$ which is sufficiently close to $t_0$, the absolute value of the integrand on the right-hand side of (\ref{interchangerel}) can be bounded by the integrable function $M/x^{\frac{1}{2}}(1-x)^{\frac{1}{2}}p^{\frac{3}{4}}(1-p)^{\frac{1}{2}}$ on $\triangle_1$.
Therefore, by Lebesgue's dominated convergence theorem, (\ref{interchangerel}) follows.
We have a similar result for the integral over $\triangle_2$.

In particular, we have
\begin{equation}\label{interchange}
\begin{aligned}
\Dc_t(G(t)) =  &\int_{\triangle_1}\Dc_t\left(
\frac{1}{2x^{\frac{1}{2}}(1-x)^{\frac{1}{2}}(x+2t)^{\frac{1}{2}}p^{\frac{3}{4}}(1-p)^{\frac{1}{2}}}
\right)dxdp   \\
&+\int_{\triangle_2}\Dc_t\left(\dfrac{1}{2x^{\frac{1}{2}}(x-1)^{\frac{1}{2}}(x+2t)^{\frac{1}{2}}q^{\frac{3}{4}}(1-q)^{\frac{1}{2}}}\right)dxdq.
\end{aligned}
\end{equation}
Moreover, by the relation 
\begin{equation*}
\Dc_t\left(
\dfrac{1}{x^{\frac{1}{2}}(x-1)^{\frac{1}{2}}(x+2t)^{\frac{1}{2}}}\right) = \dfrac{\del}{\del x}\left(\dfrac{x^\frac{1}{2}(x-1)^\frac{1}{2}}{2(x+2t)^{\frac{3}{2}}}\right),
\end{equation*}
the right-hand side of (\ref{interchange}) can be written as
\begin{equation}\label{potential}
\displaystyle\int_{\triangle_1} \dfrac{\del}{\del x}\left(\dfrac{-x^\frac{1}{2}(1-x)^\frac{1}{2}}{4(x+2t)^{\frac{3}{2}}p^{\frac{3}{4}}(1-p)^{\frac{1}{2}}}\right)dxdp + \displaystyle\int_{\triangle_2}\dfrac{\del}{\del x}\left(\dfrac{x^{\frac{1}{2}}(x-1)^{\frac{1}{2}}}{4(x+2t)^{\frac{3}{2}}q^{\frac{3}{4}}(1-q)^{\frac{1}{2}}}\right)dxdq
\end{equation}

Finally, for $\varepsilon>0$, let $\triangle_{1,\varepsilon}$ be the closed subset of $\triangle_1$ defined by
\begin{equation*}
\triangle_{1,\varepsilon} = \left\{(x,p)\in \R^2:\varepsilon\le p\le \left(\frac{x}{2-x}\right)^2, \varepsilon\le x\le 1-\varepsilon\right\}.
\end{equation*}
We have
\begin{equation*}
\begin{aligned}
&\displaystyle\int_{\triangle_1} \dfrac{\del}{\del x}\left(\dfrac{-x^\frac{1}{2}(1-x)^\frac{1}{2}}{4(x+2t)^{\frac{3}{2}}p^{\frac{3}{4}}(1-p)^{\frac{1}{2}}}\right)dxdp \\
&=  \lim_{\varepsilon\rightarrow 0}\displaystyle\int_{\triangle_{1,\varepsilon}} \dfrac{\del}{\del x}\left(\dfrac{-x^\frac{1}{2}(1-x)^\frac{1}{2}}{4(x+2t)^{\frac{3}{2}}p^{\frac{3}{4}}(1-p)^{\frac{1}{2}}}\right)dxdp \\
&= \lim_{\varepsilon \rightarrow 0} \int_{\triangle_{1,\varepsilon}} d\left(\dfrac{-x^\frac{1}{2}(1-x)^\frac{1}{2}dp}{4(x+2t)^{\frac{3}{2}}p^{\frac{3}{4}}(1-p)^{\frac{1}{2}}}\right)
= \lim_{\varepsilon \rightarrow 0} \int_{\del\triangle_{1,\varepsilon}} \dfrac{-x^\frac{1}{2}(1-x)^\frac{1}{2}dp}{4(x+2t)^{\frac{3}{2}}p^{\frac{3}{4}}(1-p)^{\frac{1}{2}}}.
\end{aligned}
\end{equation*}
We use the Stokes theorem in the last equality.
Since the 1-form 
\begin{equation*}
 \dfrac{-x^\frac{1}{2}(1-x)^\frac{1}{2}dp}{4(x+2t)^{\frac{3}{2}}p^{\frac{3}{4}}(1-p)^{\frac{1}{2}}}
\end{equation*}
on $\del\triangle_{1,\varepsilon}$ vanishes or converges to $0$ when $\varepsilon\rightarrow 0$ except on the curve $p=(x/(2-x))^2$,
\begin{equation*}
\lim_{\varepsilon \rightarrow 0} \int_{\del\triangle_{1,\varepsilon}} \dfrac{-x^\frac{1}{2}(1-x)^\frac{1}{2}dp}{4(x+2t)^{\frac{3}{2}}p^{\frac{3}{4}}(1-p)^{\frac{1}{2}}}  = \int_{x=0}^{x=1}\frac{dx}{2(x+2t)^{\frac{3}{2}}(2-x)^{\frac{1}{2}}}.
\end{equation*}
Similarly, we have 
\begin{equation*}
\displaystyle\int_{\triangle_2}\dfrac{\del}{\del x}\left(\dfrac{x^{\frac{1}{2}}(x-1)^{\frac{1}{2}}}{4(x+2t)^{\frac{3}{2}}q^{\frac{3}{4}}(1-q)^{\frac{1}{2}}}\right)dxdq = \int_{x=1}^{x=2}\frac{dx}{2(x+2t)^{\frac{3}{2}}(2-x)^{\frac{1}{2}}}.
\end{equation*}
Combining the above equations, we have
\begin{equation*}
\Dc_t(G(t)) =  \int_{x=0}^{x=2}\frac{dx}{2(x+2t)^{\frac{3}{2}}(2-x)^{\frac{1}{2}}} =\int_{u=0}^{u=1/\sqrt{t}}\dfrac{du}{2(t+1)}= \frac{1}{2\sqrt{t}(t+1)}
\end{equation*}
where we use the variable transformation $u=\sqrt{(2-x)/(x+2t)}$.
Thus we obtain the differential equation.
Note that if we rewrite the equation by $\lambda = -2t$, we get the differential equation (\ref{inhomogPF}) in the introduction.

\bibliographystyle{plain}
\bibliography{reference}

\end{document}